\definecolor{green}{rgb}{0,0.8,0} 
\newtheorem{theorem}{Theorem}[section]
\newtheorem{lemma}[theorem]{Lemma}
\newtheorem{proposition}[theorem]{Proposition}
\theoremstyle{definition}
\newtheorem{definition}[theorem]{Definition}
\theoremstyle{remark}
\newtheorem{remark}[theorem]{Remark}
\numberwithin{equation}{section}
\newcommand{\nrm}[1]{\Vert#1\Vert}
\newcommand{\abs}[1]{\vert#1\vert}
\newcommand{\set}[1]{\{#1\}}
\newcommand{\supp}{{\mathrm{supp} \, }}
\newcommand{\lap}{\triangle}
\newcommand{\ud}{\mathrm{d}}
\newcommand{\rd}{\partial}
\newcommand{\nb}{\nabla}
\newcommand{\bb}{\Big}
\newcommand{\alp}{\alpha}
\newcommand{\dlt}{\delta}
\newcommand{\Dlt}{\Delta}
\newcommand{\eps}{\epsilon}
\newcommand{\omg}{\omega}
\newcommand{\bbN}{\mathbb N}
\newcommand{\bbR}{\mathbb R}
\newcommand{\bbT}{\mathbb T}
\newcommand{\bbZ}{\mathbb Z}
\newcommand{\calD}{\mathcal D}
\newcommand{\calF}{\mathcal F}
\newcommand{\calN}{\mathcal N}
\newcommand{\calR}{\mathcal R}
\newcommand{\calS}{\mathcal S}
\newcommand{\calU}{\mathcal U}
\newcommand{\V}{\mathrm{Vol}}
\newcommand{\Ric}{\mathrm{Ric}}
\newcommand{\Riem}{\mathrm{Riem}}
\newcommand{\Hlap}{\lap_{\mathrm{H}}}
\newcommand{\seq}{\subseteq}
\begin{document}

\title[]{A heat flow approach to Onsager's conjecture \\ for the Euler equations on manifolds}
\author{Philip Isett}%
\address{Department of Mathematics, MIT, Cambridge, MA, 02139}%
\email{isett@math.mit.edu}%

\author{Sung-Jin Oh}%
\address{Department of Mathematics, UC Berkeley, Berkeley, CA, 94720}%
\email{sjoh@math.berkeley.edu}%

\thanks{The second author is a Miller research fellow, and would like to thank the Miller institute for support. \\}%

\begin{abstract}
We give a simple proof of Onsager's conjecture concerning energy conservation for weak solutions to the Euler equations on any compact Riemannian manifold, extending the results of Constantin-E-Titi and Cheskidov-Constantin-Friedlander-Shvydkoy in the flat case. When restricted to $\mathbb{T}^{d}$ or $\mathbb{R}^{d}$, our approach yields an alternative proof of the sharp result of the latter authors. 

Our method builds on a systematic use of a smoothing operator defined via a geometric heat flow, which was considered by Milgram-Rosenbloom as a means to establish the Hodge theorem. In particular, we present a simple and geometric way to prove the key nonlinear commutator estimate, whose proof previously relied on a delicate use of convolutions.
\end{abstract}
\maketitle
\section{Introduction}
Let $(M, g_{jk})$ be a smooth $d$-dimensional complete Riemannian manifold and $I \seq \bbR$ an open interval. The \emph{incompressible Euler equations} on $I \times M$ takes the form
\begin{equation} \label{eq:euler} \tag{E}
	\left\{
\begin{aligned}
	\rd_{t} v^{\ell} + \nb_{j} (v^{j} v^{\ell}) =& - \nb^{\ell} p, \\
	\nb_{j} v^{j} =& 0.
\end{aligned}
	\right.
\end{equation}
where $v^{\ell} = v^{\ell}(t)$ is a vector field and $p = p(t)$ a function on $M$ parametrized by $t \in I$. For tensor notations, see \S \ref{subsec:notations}. 

The purpose of this note is to give a simple geometric proof of the positive direction of \emph{Onsager's conjecture} concerning energy conservation for weak solutions to the Euler equations on Riemannian manifolds. When restricted to $\bbT^{d}$ or $\bbR^{d}$, our approach gives an alternative proof of the sharp results of Constantin-E-Titi \cite{MR1298949} and Cheskidov-Constantin-Friedlander-Shvydkoy \cite{MR2422377}. 

Onsager's conjecture \cite{Onsager} states that weak solutions of the Euler equation with H\"older regularity $\alp > 1/3$ enjoy conservation of energy, whereas energy dissipation may occur when $\alp \leq 1/3$. The significance of this conjecture lies in its connection with the theory of turbulence and anomalous dissipation.  
For more discussion, we refer the reader to \cite{MR2660721} and the references therein.

The positive direction of Onsager's conjecture (i.e., energy conservation for regular weak solutions) is by now quite well-understood on flat spaces of any dimension (i.e., $\bbT^{d}$ and $\bbR^{d}$). After initial progress by Eyink \cite{Eyink1994222} following the original computations of Onsager, a beautiful and simple proof of energy conservation for velocities in the Besov space $L^{3}_{t} B^{\alp}_{3, \infty}$ with $\alp > 1/3$ was given by Constantin-E-Titi in \cite{MR1298949}, thereby confirming Onsager's conjecture in the positive direction. The criterion for energy conservation was further refined by Duchon-Robert \cite{MR1734632} and Cheskidov-Constantin-Friedlander-Shvydkoy \cite{MR2422377}. In particular, in \cite{MR2422377}, energy conservation was proven for velocities in the space $L^{3}_{t} B^{1/3}_{3, c(\bbN)}$ which is sharp in view of a negative example\footnote{
The example constructed in \cite{MR2422377} is a divergence-free vector field on $\bbR^{3}$ in $B^{1/3}_{3, \infty}$ for which the energy flux would be positive for any solution to the Euler equations obtaining this initial data. However, it has not been shown that there exist solutions with this initial data.} in the slightly larger space $B^{1/3}_{3, \infty}$ (also given in \cite{MR2422377}).

We also note the recent surge of works concerning the negative direction of Onsager's conjecture, i.e., the construction of weak solutions which dissipate or otherwise fail to conserve energy with H\"{o}lder regularity $\alp \leq 1/3$.  The first examples of 
H\"{o}lder continuous solutions to Euler with dissipating energy and H\"{o}lder exponent $\alp < 1/10$ were constructed in the setting of $\bbT^3$ by De Lellis-Sz\'{e}kelyhidi in \cite{DeLellis:2012tz} following the construction of continuous solutions 
with dissipating energy in \cite{DeLellis:2012uza}.  These results were generalized to the setting of $\bbT^2$ by Choffrut-De Lellis-Sz\'{e}kelyhidi in \cite{deLSzeCts2d} and by Choffrut in \cite{choff} where a description of weak limits of these solutions was also obtained.  These solutions were constructed by the method of convex integration, a weaker form of which was first developed for the construction of $L^\infty$ solutions to the Euler equations by De Lellis-Sz\'{e}kelyhidi in 
\cite{DeLellis:2009jh}.  Solutions with H\"{o}lder regularity $\alp< 1/5$ which fail to conserve energy were later constructed by Isett in \cite{Isett:2012ub} building on ideas of \cite{DeLellis:2012uza, DeLellis:2012tz}  and introducing further improvements in the framework.  Another proof of the main result of \cite{Isett:2012ub} was given by Buckmaster-De Lellis-Sz\'{e}kelyhidi in 
\cite{Buckmaster:2013vy}, including the construction of Euler flows with H\"{o}lder regularity $\alp < 1/5$ and decreasing energy profiles.  Buckmaster \cite{Buckmaster:2013vv} has also shown that the solutions can be made to be $(1/3 -\epsilon)$-H\"{o}lder in space at almost every time through modifications in the construction.  Nonetheless, the current method appears to be limited to 
the regularity $\alp < 1/5$ and the negative direction of Onsager's conjecture remains an open problem.


Previous approaches to the positive direction of Onsager's conjecture employed convolutions and Fourier-analytic techniques (more precisely, Littlewood-Paley theory), both of which rely heavily on the translational symmetry of $\bbT^{d}$ and $\bbR^{d}$. 
Consideration of this problem on general manifolds necessitates a more geometric way of regularizing and measuring smoothness of the weak solutions to the Euler equations, and also of effectively exploiting their nonlinear structure.

Our starting point is to define a smoothing operator using a \emph{geometric heat flow} instead of convolution, as the latter is not available in general. 
More precisely, given a vector field $u^{\ell}$, we let $\calS[s] u^{\ell}$ be the solution to a geometric heat equation (with heat-time $s \geq 0$) such that $\calS[0] u^{\ell} = u^{\ell}$. 
When written for 1-forms via the metric duality, the heat flow we employ takes the form
\begin{equation*} \tag{HHF}
	\rd_{s} \omg - \Hlap \omg = 0,
\end{equation*}
where the operator $\Hlap := -(\dlt \ud + \ud \dlt)$ is the standard \emph{Hodge Laplacian} on forms. Accordingly, we shall henceforth refer to the above as the \emph{Hodge heat flow} \eqref{eq:HHF}. 
This was considered earlier by Milgram-Rosenbloom \cite{MR0042769} as an alternative means to establish the celebrated Hodge theorem on compact manifolds. 

Our proof of the positive direction of Onsager's conjecture on manifolds relies on the following ideas:
\begin{enumerate}
\item We observe that the divergence-free property (i.e., $\nb_{\ell} v^{\ell} = 0$) is invariant under \eqref{eq:HHF}, i.e., the evolution of a divergence-free vector field via \eqref{eq:HHF} remains divergence-free. Therefore, \eqref{eq:HHF} provides a natural and geometric way to smooth out weak solutions to the Euler equations.
\item In order to get an analogue of the sharp result of \cite{MR2422377}, we need a means to measure smoothness of divergence-free vector fields in the Besov sense. Motivated by the use of  heat flows in the formulation of Littlewood-Paley theory on manifolds in \cite{MR0252961, MR2221254}, we use \eqref{eq:HHF} to directly \emph{define} a scale of Besov-type spaces we need. Our definitions coincide with the standard Littlewood-Paley theoretic definitions when $M = \bbT^{d}$ or $\bbR^{d}$, and satisfy the usual embedding properties.
\item The nonlinear commutator term which arises when we apply our smoothing operator to the Euler equations exhibits cancellations analogous to those in \cite{MR1298949, MR2422377}. This is the basis of our proof of the nonlinear commutator estimate (Theorem \ref{thm:commEst}), which lies at the heart of the whole proof.
\end{enumerate}

Our method may be compared with those in \cite[Eq. (10)]{MR1298949} and \cite[Eq. (20)]{MR2422377}, both of which involve a delicate use of convolutions on $\bbT^{d}$ and $\bbR^{d}$.  
It turns out to be remarkably easy to reveal the necessary cancellations of the nonlinear commutator in our framework. One simply derives a parabolic PDE by applying $\rd_{s} - \Hlap$ to the commutator and then uses Duhamel's principle, exploiting the fact that the commutator is zero at $s=0$ in the sense of distributions (see Section \ref{sec:commEst} for more details). 

\subsection{Statement of the main results}
We now give precise statements of our main results. In the rest of this note, the Riemannian manifold $M$ will be always assumed to be smooth.

We begin with a definition of a \emph{weak solution} to the Euler equations.
\begin{definition}[Weak solution to Euler] \label{def:weakSol2Euler}
Let $I \seq \bbR$ be an open interval. We say that $(v^{\ell}, p) \in L^{2}_{\mathrm{loc}} (I \times M) \times L^{1}_{\mathrm{loc}} (I \times M)$ is a \emph{weak solution to the Euler equations} on $I$ if $v^{\ell}$ is divergence-free (i.e., $\nb_{\ell} v^{\ell} = 0$ in the sense of distributions) and for every $\omg_{\ell} \in C_{c}^{\infty}(I \times M)$ we have
\begin{equation} \label{eq:weakEuler}
	\iint_{I \times M}  v^{\ell} \rd_{t} \omg_{\ell} + v^{j} v^{\ell} \nb_{j} \omg_{\ell} +  p \, \nb^{\ell} \omg_{\ell} \, \ud^{1+d} \V = 0.
\end{equation}
\end{definition}

For simplicity, we shall henceforth limit ourselves to smooth \emph{compact} Riemannian manifolds. We remark, though, that all \emph{quantitative} estimates used below hold under much less stringent assumptions, i.e., they hold on smooth complete Riemannian manifolds with sectional curvatures uniformly bounded from above and below\footnote{An important point is that we only rely on \emph{short-time} parabolic estimates, which are much more robust than their long-time counterparts.}. 

%

We are now ready to state our main theorems. We begin with a simpler version, which is stated in terms of the standard H\"older and Sobolev spaces. 
%

\begin{theorem}[Onsager's conjecture on manifolds, simple version] \label{thm:Onsager}
Let $(M, g_{jk})$ be a compact Riemannian manifold and $I \seq \bbR$ an open interval.
Let $(v^{\ell}, p)$ be a weak solution to the Euler equations on $I \times M$ such that $v^{\ell} \in C_{t}(I; L^{2}(M))$. Then the following statements hold.
\begin{enumerate}
\item If $v^{\ell} \in L^{3}_{t}(I; C^{\alp} (M))$ with $\alp > 1/3$, then then conservation of energy holds, i.e., for all $t_{1}, t_{2} \in I$ we have
\begin{equation} \label{eq:energy}
	\frac{1}{2} \int_{M} \abs{v(t_{1})}^{2} \, \ud^{d} \V
	= \frac{1}{2} \int_{M} \abs{v(t_{2})}^{2} \, \ud^{d} \V.
\end{equation}

\item If $v^{\ell} \in L^{3}_{t}(I; W^{\alp, 3} (M))$ with $\alp \geq 1/3$, then then conservation of energy holds.
\end{enumerate}
\end{theorem}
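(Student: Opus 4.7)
The plan is to mirror the Constantin--E--Titi strategy \cite{MR1298949} but with the Hodge heat flow smoothing $\calS[s]$ replacing convolution. I would first apply $\calS[s]$ to the weak equation \eqref{eq:weakEuler} (by testing against $\calS[s]\phi_{\ell}$ for $\phi_{\ell} \in C_{c}^{\infty}(I \times M)$ and using self-adjointness of $\calS[s]$), producing a PDE governing $v_{s}^{\ell} := (\calS[s] v)^{\ell}$. The crucial structural fact, highlighted in the introduction, is that $\calS[s]$ preserves divergence-freeness, so $v_{s}$ is divergence-free for every $s>0$. Decomposing the nonlinearity as
\begin{equation*}
\calS[s](v^{j} v^{\ell}) = v_{s}^{j} v_{s}^{\ell} + R_{s}^{j\ell}, \qquad R_{s}^{j\ell} := \calS[s](v^{j} v^{\ell}) - v_{s}^{j} v_{s}^{\ell},
\end{equation*}
and collecting linear commutators between $\calS[s]$ and $\nb_{j}$ (controlled by curvature and vanishing as $s \to 0$) into a remainder, the smoothed equation takes the form $\rd_{t} v_{s}^{\ell} + \nb_{j}(v_{s}^{j} v_{s}^{\ell}) + \nb_{j} R_{s}^{j\ell} = -\nb^{\ell} \calS[s] p + (\text{l.o.t.})$.

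Since $v_{s}$ is smooth in space, I would pair this equation with $v_{s}^{\ell}$ in $L^{2}(M)$ and integrate over $[t_{1},t_{2}]$. The divergence-free property of $v_{s}$ kills both the pressure term and the cubic term $\int_{M} v_{s}^{\ell} \nb_{j}(v_{s}^{j} v_{s}^{\ell})\, \ud^{d} \V$ after integration by parts. Moving the derivative off the commutator and symmetrizing in $(j,\ell)$, using $R_{s}^{j\ell} = R_{s}^{\ell j}$, yields
\begin{equation*}
\tfrac{1}{2}\int_{M} \abs{v_{s}(t_{2})}^{2}\, \ud^{d} \V - \tfrac{1}{2}\int_{M} \abs{v_{s}(t_{1})}^{2}\, \ud^{d} \V = \tfrac{1}{2}\int_{t_{1}}^{t_{2}} \int_{M} (\nb_{j} v_{s}^{\ell} + \nb_{\ell} v_{s}^{j}) R_{s}^{j\ell}\, \ud^{d} \V\, \ud t + E_{s},
\end{equation*}
where $E_{s}$ is an error from the linear $[\calS[s],\nb]$ commutators, bounded by elementary short-time parabolic estimates for $\calS[s]$ together with $v \in L^{2}$.

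The final step is to send $s \to 0^{+}$. By $v \in C_{t}(I; L^{2}(M))$ and the strong $L^{2}$-convergence $\calS[s] v \to v$, the left-hand side tends to $\tfrac{1}{2}\int_{M} \abs{v(t_{2})}^{2}\, \ud^{d} \V - \tfrac{1}{2}\int_{M} \abs{v(t_{1})}^{2}\, \ud^{d} \V$. The crux is that the symmetrized quadratic term on the right-hand side vanishes, which is exactly what Theorem \ref{thm:commEst} is designed to supply once we translate the H\"older/Sobolev hypotheses into the heat-flow Besov scale introduced in the paper. In the subcritical case $\alp > 1/3$ of part (1), I expect a quantitative decay of order $s^{(3\alp-1)/2}$ after time integration against $\nrm{v}_{L^{3}_{t} C^{\alp}}^{3}$, making convergence immediate. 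The main obstacle is really the critical case $\alp = 1/3$ of part (2): the embedding of $W^{1/3,3}(M)$ into the heat-flow-defined Besov space $B^{1/3}_{3, c(\bbN)}(M)$ (the manifold analogue of the condition in \cite{MR2422377}) replaces quantitative decay with a dominated-convergence argument at the Besov level, and checking this equivalence of norms on the manifold is the step most sensitive to the specific heat-flow definition.
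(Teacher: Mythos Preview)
Your proposal is correct and follows essentially the same route as the paper: smooth with $\calS[s]=e^{s\Hlap}$, derive an energy identity for $v_{s}$ using divergence-freeness, reduce to the commutator estimate Theorem~\ref{thm:commEst}, and then invoke the embeddings $C^{\alp'}\hookrightarrow W^{\alp,3}\hookrightarrow B^{1/3}_{3,c(\bbN)}$ (exactly Proposition~\ref{prop:embed}) to cover both parts. This is precisely how the paper argues---Theorem~\ref{thm:Onsager} is stated there as an immediate corollary of Proposition~\ref{prop:embed} and Theorem~\ref{thm:Onsager:difficult}, whose proof in turn rests on the abstract energy identity (Theorem~\ref{thm:abstractOnsager}) and Theorem~\ref{thm:commEst}.

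One point worth streamlining: your intermediate decomposition $\calS[s](v^{j}v^{\ell})=v_{s}^{j}v_{s}^{\ell}+R_{s}^{j\ell}$ forces you to extend $\calS[s]$ to symmetric $2$-tensors and then to control the linear commutators $[\calS[s],\nb_{j}]$ you collect into $E_{s}$. These terms can indeed be handled (they carry a curvature factor and, after one integration by parts, decay like $s^{(1+\alp)/2}$ using the $L^{3}$ bound on $v$---not merely $L^{2}$ as you wrote), but the paper sidesteps the whole issue by keeping the commutator at the vector-field level from the start: it works directly with $W^{\ell}=\calS[s]\nb_{j}(v^{j}v^{\ell})-\nb_{j}(v_{s}^{j}v_{s}^{\ell})$, interpreted weakly, so that $\calS[s]$ only ever acts on $1$-forms and no $E_{s}$ term appears. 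Theorem~\ref{thm:commEst} is formulated for exactly this $W^{\ell}$, so you save a step by matching that formulation rather than passing through $R_{s}^{j\ell}$.
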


The H\"older space $C^{\alp}(M)$ in the theorem is defined by
\begin{equation*}
	C^{\alp}(M) := \set{u^{\ell} \in C^{0}(M) : \nrm{u^{\ell}}_{C^{0}(M)} + \sup_{x \in M} \sup_{h \in T_{x} M} \abs{h}^{-\alp} \abs{\dlt_{x} u(h)} < \infty},
\end{equation*}
where $\dlt_{x} u(h) \in T_{x} M$ is the difference between $u^{\ell}(x)$ and the parallel transport of $u^{\ell}(\exp_{x}(h))$ to $x = \exp_{x}(0)$ along the radial geodesic, where $\exp_{x}$ is the exponential map at $x$. The fractional Sobolev space $W^{\alp, 3}$(M) $(0 < \alp < 1)$ is defined to be the complex interpolation space between $L^{3}(M)$ and $W^{1,3}(M)$. For a general discussion on complex interpolation, we refer the reader to \cite{MR0482275}. 

In fact, our proof gives a sharper but more technical criterion, which reduces to that of Cheskidov-Constantin-Friedlander-Shvydkoy \cite[Theorem 3.3]{MR2422377}\footnote{The theorem in \cite[Theorem 3.3]{MR2422377} is stated on $\bbR^{d}$, but the same proof applies to the $\bbT^{d}$ case. On the other hand, our result is stated only for compact manifolds, but the proof also applies to the case $\bbR^{d}$.} when restricted to $M = \bbT^{d}$. Thus, our approach furnishes an alternative proof of the sharp result in \cite{MR2422377}. To state this version, we need a means to measure smoothness of vector fields in the Besov sense. As discussed in the introduction, we rely on the Hodge heat flow \eqref{eq:HHF} for this purpose as follows.

%

\begin{definition} \label{def:besov}
Let $(M, g_{jk})$ be a compact Riemannian manifold and $I \seq \bbR$ an open interval. Let $e^{s \Hlap}$ be the heat semi-group generated by the Hodge Laplacian $\Hlap$ (see Section \ref{sec:HHF} for more on this semi-group). Given $0 < \alp < 1$ and $1 \leq p < \infty$, we define the space $B^{\alp}_{p, c(\bbN)}$ of vector fields to be the completion of $C^{\infty}_{c}(M)$ with respect to  the norm
\begin{equation} \label{eq:besovInfty}
	\nrm{u^{\ell}}_{B^{\alp}_{p, \infty}(M)} := \nrm{u^{\ell}}_{L^{p}(M)} + \sup_{s \in (0,1]} s^{\frac{1-\alp}{2}} \nrm{\nb e^{s \Hlap} u^{\ell}}_{L^{p}(M)}.
\end{equation}
\end{definition}

The preceding definition is justified by the fact that on $M = \bbT^{d}$ or $\bbR^{d}$, it coincides with the standard Littlewood-Paley theoretic definition as given in \cite{MR2422377}. Indeed, on a flat space, $e^{s \Hlap}$ is simply the (component-wise) standard heat semi-group $e^{s \lap}$, and \eqref{eq:besovInfty} becomes a well-known characterization of the (inhomogeneous) Besov norm via the heat semi-group. For completeness, we sketch a proof of this fact in Appendix \ref{sec:equivBesov}. 

\begin{remark} 
For $0 < \alp < 1$ and $1 \leq p < \infty$, the natural definition for the space $B^{\alp}_{p, \infty}(M)$ is
\begin{equation*} 
	B^{\alp}_{p, \infty}(M) 
	:= \set{ u^{\ell} \in \calD'(M): \nrm{u^{\ell}}_{B^{\alp}_{p, \infty}(M)} < \infty},
\end{equation*}
where $\calD'(M)$ is the space of distributions on $M$ (i.e., dual space of smooth compactly supported 1-forms). 
Indeed, when $M = \bbT^{d}$ or $\bbR^{d}$, this also coincides with the standard definition. We remark that $B^{\alp}_{p, c(\bbN)}(M)$ is a proper subset of $B^{\alp}_{p, \infty}(M)$; for example, see Lemma \ref{lem:cNBesov}. 
\end{remark}

\begin{remark} 
It is also possible to define Besov spaces with more general `summability' exponent. 
Indeed, given $0 < \alp < 1$ and $1 \leq p, r < \infty$, we define the space $B^{\alp}_{p, r}(M)$ of vector fields to be the completion of $C^{\infty}_{c}(M)$ with respect to the norm
\begin{equation*}
	\nrm{u^{\ell}}_{B^{\alp}_{p, r}(M)} := \nrm{u^{\ell}}_{L^{p}(M)} + \nrm{s^{\frac{1-\alp}{2}} \nrm{\nb e^{s \Hlap} u^{\ell}}_{L^{p}(M)}}_{L^{r}((0, 1], \ud s / s)}.
\end{equation*}

Again, these spaces coincide with the standard definition on $M = \bbT^{d}$ or $\bbR^{d}$ (see Appendix \ref{sec:equivBesov}). Moreover, they satisfy the standard embedding property
\begin{equation*}
	B^{\alp}_{p, r} \seq B^{\alp}_{p, r'} \seq B^{\alp}_{p, c(\bbN)} \quad \hbox{ for } \quad 
	1 \leq r \leq r' < \infty.
\end{equation*}
\end{remark}

The spaces $C^{\alp}(M)$, $W^{\alp, p}(M)$ and $B^{\alp}_{p, c(\bbN)}(M)$ are related to each other as follows.
\begin{proposition} \label{prop:embed}
Let $M$ be a smooth compact Riemannian manifold. Then the following statements hold.
\begin{enumerate}
\item For $0 < \alp \leq 1$ and $2 \leq p < \infty$, we have $W^{\alp, p}(M) \seq B^{\alp}_{p, c(\bbN)}(M)$.
\item For $0 < \alp < \alp' \leq 1$ and $1 \leq p < \infty$, we have $C^{\alp'}(M) \seq W^{\alp, p}(M)$.
\end{enumerate}
\end{proposition}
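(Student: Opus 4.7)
My plan is to apply Calder\'on's complex interpolation theorem to the family of operators
\[
	T_s u^{\ell} := s^{\frac{1-\alp}{2}} \nb e^{s\Hlap} u^{\ell}, \quad s \in (0, 1].
\]
First, I would establish two endpoint estimates, uniform in $s \in (0,1]$:
(a) $\nrm{T_s u^{\ell}}_{L^p} \aleq s^{-\alp/2} \nrm{u^{\ell}}_{L^p}$, which is the standard parabolic gradient estimate for the Hodge heat flow; and
(b) $\nrm{T_s u^{\ell}}_{L^p} \aleq s^{\frac{1-\alp}{2}} \nrm{u^{\ell}}_{W^{1, p}}$, which asserts that the covariant derivative is essentially preserved along the flow. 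For (b), I would differentiate and write
\[
	\rd_s \nb u_s = \Hlap \nb u_s + [\nb, \Hlap] u_s,
\]
observe that $[\nb, \Hlap]$ is a first-order operator with bounded (curvature) coefficients on compact $M$, and close a Duhamel $L^p$-energy estimate; for $p \geq 2$ this yields $\nrm{\nb u_s}_{L^p} \leq C e^{Cs} \nrm{\nb u}_{L^p}$. Complex interpolation of (a) and (b) at exponent $\alp$ then gives $\nrm{T_s u^{\ell}}_{L^p} \leq C \nrm{u^{\ell}}_{W^{\alp, p}}$ uniformly in $s$, proving $u^{\ell} \in B^{\alp}_{p, \infty}$. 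Since $C^\infty_c(M)$ is dense in $W^{\alp, p}(M)$ (it is dense in both endpoints of the complex interpolation), the embedding into $B^{\alp}_{p, c(\bbN)}$ follows by approximation, using the definition of the latter as the closure of $C^\infty_c$ in the $B^{\alp}_{p, \infty}$-norm.

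\textbf{Part (2).} My plan is to use the Bessel-potential characterization of complex interpolation,
\[
	\nrm{u^{\ell}}_{W^{\alp, p}(M)} \aeq \nrm{u^{\ell}}_{L^p(M)} + \nrm{(1-\Hlap)^{\alp/2} u^{\ell}}_{L^p(M)},
\]
combined with Balakrishnan's subordination formula, which represents the fractional power as an integral of $e^{s\Hlap} u^{\ell} - u^{\ell}$ against the singular weight $s^{-\alp/2 - 1}\, \ud s$ (up to exponential factors and a $\Gamma$ constant). The core analytic input I would establish is the H\"older-decay estimate
\[
	\nrm{e^{s\Hlap} u^{\ell} - u^{\ell}}_{L^p(M)} \aleq s^{\alp'/2} \nrm{u^{\ell}}_{C^{\alp'}(M)}, \quad s \in (0, 1],
\]
which would follow by writing $(e^{s\Hlap} u^{\ell})(x)$ as an averaged parallel transport of $u^{\ell}$ against the Hodge heat kernel and exploiting both the Gaussian concentration of the kernel at scale $\sqrt{s}$ and the pointwise H\"older bound $\abs{\dlt_x u^{\ell}(h)} \leq \nrm{u^{\ell}}_{C^{\alp'}} \abs{h}^{\alp'}$. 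Substituting into the Balakrishnan integral and splitting at $s = 1$, the short-time factor $s^{-\alp/2 - 1 + \alp'/2}$ is integrable near zero precisely when $\alp' > \alp$, while the long-time part is handled trivially by $L^p$-boundedness of $e^{s\Hlap}$ and the inclusion $C^{\alp'}(M) \seq L^p(M)$ (valid since $M$ is compact).

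\textbf{Main obstacle.} I expect the chief difficulty to arise from working with the Hodge Laplacian on 1-forms rather than the scalar Laplacian: via Weitzenb\"ock, $\Hlap$ differs from $-\nb^*\nb$ by a Ricci-curvature term, which complicates the heat-kernel estimates (the averaging is by parallel transport rather than straight convolution) and restricts the natural $L^p$-energy method to $p \geq 2$, explaining that hypothesis in (1). A secondary subtlety is linking the complex-interpolation definition of $W^{\alp, p}$ to the Bessel-potential characterization used in (2); the equivalence is standard for $1 < p < \infty$ on compact manifolds, but the $p = 1$ endpoint included in the statement requires either a slight modification of the argument or a direct construction of an analytic family witnessing $u^{\ell} \in [L^p, W^{1,p}]_\alp$.
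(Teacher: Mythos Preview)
Your Part (1) is essentially the paper's proof: the paper establishes exactly the two endpoint estimates you call (a) and (b) (its estimates (A.3) and (A.2), proved by Bochner-type energy arguments for $p=2Z$ and then interpolated), and then complex-interpolates at parameter $\alp$ to obtain the uniform bound $s^{(1-\alp)/2}\nrm{\nb e^{s\Hlap} u}_{L^p}\leq C\nrm{u}_{W^{\alp,p}}$, invoking density of $C^\infty_c$ in $W^{\alp,p}$ just as you do. One small correction: your bound in (b) should read $\nrm{\nb u_s}_{L^p}\leq Ce^{Cs}\nrm{u}_{W^{1,p}}$ rather than $\nrm{\nb u}_{L^p}$ on the right, since the commutator $[\nb,\Hlap]$ contains a zeroth-order term $\nb\Riem\cdot u$; this is harmless for the argument.

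Your Part (2) takes a genuinely different route. The paper's proof is entirely extrinsic: it chooses a finite atlas $\{(P_j,\Phi_j)\}$ and a subordinate partition of unity $\{\psi_j\}$, pushes each $\psi_j u$ forward to $\bbT^d$, observes that the $C^{\alp'}$, $L^p$, and $W^{1,p}$ norms are uniformly comparable under these chart maps (hence, by interpolation, so is $W^{\alp,p}$), and then simply quotes the flat inequality $\nrm{\cdot}_{W^{\alp,p}(\bbT^d)}\leq C\nrm{\cdot}_{C^{\alp'}(\bbT^d)}$ from Triebel. Your approach is intrinsic, going through the Bessel-potential description $(1-\Hlap)^{\alp/2}$ and a subordination integral, with the analytic input $\nrm{e^{s\Hlap}u-u}_{L^p}\aleq s^{\alp'/2}\nrm{u}_{C^{\alp'}}$. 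The paper's route is more elementary and, crucially, handles all $1\leq p<\infty$ uniformly with no extra work, since the chart comparison and the $\bbT^d$ embedding are both valid at $p=1$. Your route is more geometric and avoids coordinates, but it purchases two nontrivial obligations that you correctly flag: the identification of the complex-interpolation space $[L^p,W^{1,p}]_\alp$ with the Hodge--Bessel potential space on forms (standard for $1<p<\infty$ via Seeley-type results, but not free), and the $p=1$ endpoint, where that identification genuinely fails. For $p=1$ your sketch as written has a real gap; the paper's chart reduction sidesteps it entirely.
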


We remark that in (2), the strict inequality $\alp < \alp'$ is necessary, as one can readily verify in the case $M = \bbT^{d}$. Taking $M = \bbR^{d}$, it can also be seen that this inclusion is false without the assumption of compactness. A proof of this proposition will be given at the end of Appendix \ref{sec:heatFlowEst}. 

We are now ready to state the sharper version of our main theorem.

\begin{theorem}[Onsager's conjecture on manifolds, sharp version] \label{thm:Onsager:difficult}
Let $(M, g_{jk})$ be a compact Riemannian manifold and $I \seq \bbR$ an open interval.
Let $(v^{\ell}, p)$ be a weak solution to the Euler equations on $I \times M$ such that $v^{\ell} \in L^{3}_{t}(I; B^{1/3}_{3, c(\bbN)}(M)) \cap C_{t} (I; L^{2}(M))$. Then conservation of energy \eqref{eq:energy} holds.
\end{theorem}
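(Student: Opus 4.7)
The plan is to implement the Constantin--E--Titi strategy with the Hodge heat flow \eqref{eq:HHF} replacing mollification. For $s > 0$, set $v_s^{\ell} := \calS[s] v^{\ell}$, which by observation (1) of the introduction is smooth on $M$ and divergence-free for each $t$. I would first apply $\calS[s]$ to the weak Euler equation \eqref{eq:weakEuler} (equivalently, test against forms of the type $\calS[s] \omg_{\ell}$) to obtain the smoothed equation
\begin{equation*}
	\rd_{t} v_s^{\ell} + \nb_{j} \calS[s](v^{j} v^{\ell}) = -\nb^{\ell} \calS[s] p,
\end{equation*}
holding pointwise in $x$. Pairing with $v_s^{\ell}$ and integrating over $M$, the pressure term vanishes after integration by parts since $\nb_{\ell} v_s^{\ell} = 0$. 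Setting $R_s^{j\ell} := \calS[s](v^{j} v^{\ell}) - v_s^{j} v_s^{\ell}$, the $v_s^{j} v_s^{\ell}$ contribution to the nonlinearity also vanishes, as $\int_{M} v_s^{\ell} v_s^{j} \nb_{j} v_s^{\ell} \, \ud^{d} \V = \tfrac{1}{2} \int_{M} v_s^{j} \nb_{j} |v_s|^{2} \, \ud^{d} \V = 0$. Integrating in time yields the central identity
\begin{equation*}
	\frac{1}{2} \int_{M} |v_s(t_2)|^{2} \, \ud^{d} \V - \frac{1}{2} \int_{M} |v_s(t_1)|^{2} \, \ud^{d} \V = \int_{t_1}^{t_2} \int_{M} \nb_{j} v_s^{\ell} \cdot R_s^{j \ell} \, \ud^{d} \V \, \ud t.
\end{equation*}

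As $s \to 0^{+}$, the left-hand side converges to $\tfrac{1}{2}\nrm{v(t_2)}^{2}_{L^{2}} - \tfrac{1}{2}\nrm{v(t_1)}^{2}_{L^{2}}$ by the assumed $L^{2}$-continuity of $v$ in time and the strong $L^{2}$-continuity of the Hodge heat semigroup, so it suffices to show the right-hand side tends to zero. By H\"older's inequality, the space integrand is bounded by $\nrm{\nb v_s}_{L^{3}(M)} \nrm{R_s}_{L^{3/2}(M)}$. The nonlinear commutator estimate of Theorem \ref{thm:commEst} is expected to yield the sharp bound $\nrm{R_s}_{L^{3/2}(M)} \aleq s \nrm{\nb v_s}^{2}_{L^{3}(M)}$, so that the integrand is controlled by $s \nrm{\nb v_s}^{3}_{L^{3}(M)} = (s^{1/3} \nrm{\nb v_s}_{L^{3}(M)})^{3}$.

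The hypothesis $v \in L^{3}_{t}(I; B^{1/3}_{3, c(\bbN)}(M))$ is exactly what is needed to close the argument. By Definition \ref{def:besov}, $B^{1/3}_{3, c(\bbN)}(M)$ is the completion of $C^{\infty}_{c}(M)$ in the $B^{1/3}_{3,\infty}$-norm, so a standard approximation argument yields $s^{1/3} \nrm{\nb v_s(t)}_{L^{3}(M)} \to 0$ as $s \to 0^{+}$ for each such $v(t)$: if $\phi \in C^{\infty}(M)$ approximates $v(t)$ in $B^{1/3}_{3,\infty}$, the difference contributes an arbitrarily small error, while the smooth part satisfies $\nrm{\nb \phi_s}_{L^{3}(M)} = O(1)$. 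The quantity $s^{1/3}\nrm{\nb v_s(t)}_{L^{3}(M)}$ is also dominated uniformly in $s$ by $\nrm{v(t)}_{B^{1/3}_{3,\infty}(M)}$, which lies in $L^{3}_{t}$ by hypothesis. Dominated convergence then forces
\begin{equation*}
	\int_{t_1}^{t_2} \bigl( s^{1/3} \nrm{\nb v_s(t)}_{L^{3}(M)} \bigr)^{3} \, \ud t \To 0 \quad \text{as } s \to 0^{+},
\end{equation*}
completing the proof modulo the commutator estimate.

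The main obstacle is thus Theorem \ref{thm:commEst} itself. Its proof relies on the observation, highlighted in the introduction, that applying the parabolic operator $\rd_{s} - \Hlap$ to $R_s^{j\ell}$ produces a manifestly quadratic expression in $\nb v_s$ of the form $-2 g^{ab} \nb_{a} v_s^{j} \nb_{b} v_s^{\ell}$, up to curvature terms arising from the non-commutativity of $\nb$ with $\Hlap$. Combined with the distributional initial condition $R_0^{j\ell} = 0$ and Duhamel's formula, this structural identity should self-improve into the claimed $L^{3/2}$-bound via short-time $L^{p}$-regularization of the Hodge heat semigroup. As emphasized by the paper's footnote, the curvature contributions are controlled because only short-time heat-kernel estimates are required, for which bounded sectional curvature suffices.
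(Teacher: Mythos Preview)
Your outline is precisely the paper's strategy: smooth by the Hodge heat semigroup, derive the energy identity with a commutator remainder, and control that remainder via Duhamel after computing $(\partial_s - \Hlap)$ of the commutator. Two points of care are worth noting, however.

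First, applying $\calS[s]=e^{s\Hlap}$ to \eqref{eq:euler} produces the nonlinear term $\calS[s]\nabla_j(v^j v^\ell)$, not $\nabla_j\calS[s](v^j v^\ell)$; the operator $e^{s\Hlap}$ is defined on $1$-forms (equivalently vector fields), not on symmetric $2$-tensors, so your $R_s^{j\ell}:=\calS[s](v^j v^\ell)-v_s^j v_s^\ell$ is not well-defined in this framework. The paper sidesteps this by working directly with the \emph{vector-field} commutator $W^\ell(s) = e^{s\Hlap}\nabla_j(v^j v^\ell)-\nabla_j(v_s^j v_s^\ell)$ and pairing it with $v_{s,\ell}$; after integration by parts and self-adjointness this is equivalent to your flux integral, but it keeps all objects in the category where $e^{s\Hlap}$ acts.

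Second, Theorem~\ref{thm:commEst} does not assert a pointwise bound of the form $\|R_s\|_{L^{3/2}}\lesssim s\|\nabla v_s\|_{L^3}^2$, and Duhamel does not give one: it yields an integral $\int_0^s (\cdots)\,ds'$ over all intermediate heat-times, and after pairing with $v_{s,\ell}$ and using self-adjointness of $e^{(s-s')\Hlap}$ the trilinear integrand involves $\nabla U(s')\,\nabla U(s')\,\nabla U(2s-s')$ rather than three copies of $\nabla v_s$. The scaling is the same and your conclusion survives (via the rescaling $s'=s\sigma$ and dominated convergence, as in the paper's proof of Theorem~\ref{thm:commEst}), but the intermediate estimate you wrote is not what one actually proves.
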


\begin{remark} 
Theorem \ref{thm:Onsager} is an immediate corollary of Proposition \ref{prop:embed} and Theorem \ref{thm:Onsager:difficult}. 
\end{remark}

\begin{remark} 
As we shall show in Appendix \ref{sec:equivBesov}, $B^{1/3}_{3, c(\bbN)}(M)$ coincides with the standard definition as in \cite{MR2422377} when $M = \bbT^{d}$ or $\bbR^{d}$. Thus Theorem \ref{thm:Onsager:difficult} directly gives the sharp criterion of \cite{MR2422377} on $\bbT^{d}$. Moreover, the methods of this note apply to the non-compact case of $\bbR^{d}$, if we enlarge the set of test functions in Definition \ref{def:weakSol2Euler} to the space $\calS(\bbR^{d})$ of Schwartz 1-forms as in \cite{MR2422377}. This recovers \cite[Theorem 3.3]{MR2422377}.
\end{remark}

\subsection{Outline of the note}
The rest of this note is concerned mainly with the proof of Theorem \ref{thm:Onsager:difficult}. In Section \ref{sec:abstractOnsager}, we give an abstract theorem concerning energy conservation, which reduces the proof of Theorem \ref{thm:Onsager:difficult} to finding an appropriate smoothing operator $\calS[s]$ and a function space $X$ on which the crucial \emph{commutator estimate} \eqref{eq:babyCommEst} holds. In Section \ref{sec:HHF}, we construct a smoothing operator $\calS[s]$ in terms of \eqref{eq:HHF}, i.e., $\calS[s] := e^{s \Hlap}$. Here we show, in particular, the invariance of divergence-free property under \eqref{eq:HHF}. Then in Section \ref{sec:commEst}, we establish the commutator estimate \eqref{eq:babyCommEst} with $X = L^{3}_{t}(I; B^{1/3}_{3, c(\bbN)}(M))$, thereby completing the proof of Theorem \ref{thm:Onsager:difficult}. 

Our note is complemented with two appendices. In Appendix \ref{sec:heatFlowEst}, we sketch a simple approach to prove short-time $L^{p}$ estimates for \eqref{eq:HHF} on a general class of manifolds; this will be used to give a proof of Proposition \ref{prop:embed}. In Appendix \ref{sec:equivBesov}, we establish the equivalence between Definition \ref{def:besov} in the case $M = \bbT^{d}$ or $\bbR^{d}$ with the standard definition of Besov spaces via Littlewood-Paley theory. 

\subsection{Notations and conventions} \label{subsec:notations}
\begin{itemize}
\item We use $\bbR$ and $\bbN$ to denote the real line and non-negative integers, respectively.
\item We employ the abstract index notation, by which we mean the indices are used as place-holders indicating the type of a tensor and which components are contracted. According to this notation, a vector field $v^{\ell}$ on $M$ has an upper index, whereas a 1-form $\omg_{\ell}$ on $M$ has a lower index.
Indices are raised or lowered using the metric $g_{jk}$ and repeated upper and lower indices are summed up.

\item The covariant derivative on $M$ is denoted by $\nb_{j}$. For $L \in \bbN$, an $L$-fold covariant derivative will be denoted by $\nb^{(L)}_{j_{1} \cdots j_{L}}$. 
	 The Riemann curvature tensor is denoted by $\Riem_{ijk\ell}$, which is defined by the relation $(\nb^{(2)}_{ij}  - \nb^{(2)}_{ji}) u^{\ell} = \tensor{\Riem}{_{ij}^{\ell}_{k}} u^{k}$. The Ricci curvature is denoted by 
$\Ric_{ij} := \tensor{\Riem}{_{ki}^{k}_{j}}$.

\item For a tensor $T$, we write $\abs{T}^{2}$ for the induced inner product of $T$ with itself. The $d$- and $(d+1)$-dimensional induced volume on $M$ and $I \times M$ are denoted by $\ud^{d} \V$ and $\ud^{1+d} \V = \ud t \, \ud^{d} \V$, respectively. 

\item The space $L^{p}(M)$ is defined using the above norm and measure. The Sobolev space $W^{1, p}(M)$ is defined with the norm $\nrm{u}_{W^{1,p}(M)} :=\nrm{u}_{L^{p}(M)}+ \nrm{\nb u}_{L^{p}(M)}$. The space $C^{0}(M)$ consists of all bounded continuous tensors, and $C^{\infty}_{c}(M)$ is the space of all smooth compactly supported tensors (both of a given type).
\end{itemize}

\section{Abstract theorem on conservation of energy} \label{sec:abstractOnsager}
Let $(v^{\ell}, p)$ be a weak solution to the Euler equations. To prove conservation of energy \eqref{eq:energy}, it suffices to show that for any smooth function $\eta(t)$ supported in $I$, we have
\begin{equation} \label{eq:energy2}
	- \iint_{\bbR \times M} \eta'(t) \frac{\abs{v(t)}^{2}}{2} \, \ud^{1+d} \V = 0.
\end{equation}

Indeed, as $v^{j} \in C_{t} (I; L^{2}_{x})$, we may take $\eta$ to be the characteristic function of a time interval $(t_{1}, t_{2}) \seq I$ by approximation.

For a smooth solution to the Euler equations, \eqref{eq:energy2} is proven by multiplying the equation with the test function $\eta(t) v^{\ell}(t)$ and integrating by parts. However, such a procedure is not justified for a weak solution. One may nevertheless attempt to carry out this proof by first approximating the weak solution by smooth solutions and then handling the error from approximation. This motivates the following definition.

\begin{definition} \label{def:smthOp}
We say that an operator $\calS[s]$ (parametrized by $s \in (0,1]$) on the space of $L^{2}(M)$ vector fields is a \emph{smoothing operator} if the following properties hold:
\begin{enumerate}
\item For each $s \in (0, 1]$, $\calS[s]$ is a bounded self-adjoint operator on $L^{2}(M)$.
\item For $u^{\ell} \in L^{2}(M)$, $\calS[s] u^{\ell} \to u^{\ell}$ in $L^{2}(M)$ as $s \to 0$.
\item For $u^{\ell} \in L^{2}(M)$, $L \in \bbN$ and $s \in (0,1]$, we have $\nrm{\nb^{(L)}_{j_{1} \cdots j_{L}} \calS[s] u}_{L^{2}(M) \cap C^{0}(M)} \leq C_{s, L} \nrm{u}_{L^{2}(M)}$.
\end{enumerate}

Using the metric, we extend $\calS[s]$ to $L^{2}(M)$ one-forms as well by lowering the index.
\end{definition}

The following theorem reduces the problem of establishing conservation of energy to that of finding a smoothing operator which satisfies certain additional properties.
\begin{theorem} \label{thm:abstractOnsager}
Let $(M, g_{jk})$ be a compact Riemannian manifold, $I \seq \bbR$ an open interval. Let $(v^{\ell}, p)$ be a weak solution to the Euler equations on $I \times M$ such that $v^{\ell} \in C_{t}(I; L^{2}(M))$. Let $\calS[s]$ be a smoothing operator which satisfies the following additional properties:
\begin{enumerate}
\item (Invariance of divergence) For every $u^{\ell} \in L^{2}(M)$, $\calS[s] u^{\ell}$ is divergence-free if $u^{\ell}$ is.
\item (Commutator estimate) There exists a set $X$ of vector fields on $I \times M$ such that for every $u^{\ell} \in X$ and smooth function $\eta(t)$ with compact support on $I$, we have\footnote{The expression $\iint \eta(t) \calS[s] \nb_{j}(u^{j} u^{\ell}) \calS[s] u_{\ell} \, \ud^{1+d} \V$ is to be interpreted in the weak sense, i.e., as $- \iint \eta(t) (u^{j} u^{\ell}) \nb_{j} \calS[s]^{2} u_{\ell} \, \ud^{1+d} \V$.}
\begin{equation} \label{eq:babyCommEst}
	\bb\vert \iint_{I \times M} \eta(t) \bb( \calS[s] \nb_{j}(u^{j} u^{\ell}) - \nb_{j}(\calS[s] u^{j} \calS[s] u^{\ell}) \bb) \calS[s] u_{\ell} \, \ud^{1+d} \V  \bb\vert
	\to 0 \hbox{ as } s \to 0.
\end{equation}
\end{enumerate}

Then under the additional assumption $v^{\ell} \in X$, conservation of energy \eqref{eq:energy} holds.
\end{theorem}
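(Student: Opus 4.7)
The strategy is to perform a regularized energy identity on the smoothed velocity $v_s^\ell := \calS[s] v^\ell$ and then pass to the limit $s \to 0^+$. Two key observations drive the argument: (i) the pressure and self-transport terms vanish by integration by parts thanks to the divergence-freeness of $v_s$ (property (1)); and (ii) the remaining error is precisely the commutator whose smallness is hypothesized in \eqref{eq:babyCommEst}.

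First, testing \eqref{eq:weakEuler} against $\omg_\ell = \calS[s]\tld{\omg}_\ell$ for arbitrary $\tld{\omg} \in C_c^\infty(I \times M)$ and exploiting the self-adjointness of $\calS[s]$, I would derive the distributional identity
\begin{equation*}
\rd_t v_s^\ell = -\calS[s] \nb_j(v^j v^\ell) - \calS[s] \nb^\ell p,
\end{equation*}
where $\calS[s]$ is applied to the distributions $\nb_j(v^j v^\ell)$ and $\nb^\ell p$ via duality; property (3) ensures that the result is an $L^2(M)$-valued function of $t$. Because $v \in C_t(I; L^{2})$ and $p \in L^{1}_{\mathrm{loc}}$, the right-hand side lies in $L^1_{\mathrm{loc}}(I; L^2(M))$, and since $v_s \in C_t(I; L^2(M))$, $v_s$ is in fact absolutely continuous in $t$ as an $L^2$-valued map with the equation holding pointwise a.e.\ in $t$. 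Property (3) furthermore yields $v_s(t) \in C^L(M)$ for every $t$ and $L$.

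Next, I pair the smoothed equation with $v_{s,\ell}$ and integrate over $M$. The pressure term vanishes by self-adjointness and property (1):
\begin{equation*}
\int_M v_{s,\ell} \calS[s] \nb^\ell p \, \ud^d \V = \int_M (\calS[s]^2 v)_\ell \nb^\ell p \, \ud^d \V = -\int_M p \, \nb^\ell (\calS[s]^2 v)_\ell \, \ud^d \V = 0.
\end{equation*}
Adding and subtracting $\nb_j(v_s^j v_s^\ell)$ and using $\int_M v_{s,\ell} \nb_j(v_s^j v_s^\ell) \, \ud^d \V = 0$ (by classical integration by parts, thanks to $v_s \in C^L_x$ being divergence-free), I obtain
\begin{equation*}
\tfrac{1}{2} \tfrac{d}{dt} \int_M \abs{v_s}^2 \, \ud^d \V = -\int_M v_{s,\ell} \bb(\calS[s] \nb_j(v^j v^\ell) - \nb_j(v_s^j v_s^\ell)\bb) \, \ud^d \V.
\end{equation*}
Multiplying by $\eta(t)$ and integrating by parts in $t$ (justified since $\nrm{v_s(t)}_{L^2}^2$ is absolutely continuous in $t$) yields
\begin{equation*}
\tfrac{1}{2} \iint_{I \times M} \eta'(t) \abs{v_s}^2 \, \ud^{1+d} \V = \iint_{I \times M} \eta(t) v_{s,\ell} \bb(\calS[s] \nb_j(v^j v^\ell) - \nb_j(v_s^j v_s^\ell)\bb) \, \ud^{1+d} \V.
\end{equation*}

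Finally, I send $s \to 0^+$. The right-hand side tends to zero by the commutator estimate \eqref{eq:babyCommEst} applied with $u = v \in X$. For the left-hand side, property (2) gives $v_s(t) \to v(t)$ in $L^2(M)$ for each $t$, and the Banach--Steinhaus theorem applied to $\set{\calS[s]}_{s \in (0,1]}$ (using (1)--(2)) supplies a uniform operator bound so that $\nrm{v_s(t)}_{L^2} \leq C \nrm{v(t)}_{L^2}$; dominated convergence then replaces $\abs{v_s}^2$ by $\abs{v}^2$. This produces \eqref{eq:energy2}, which by approximating the indicator of $(t_1, t_2) \seq I$ and using $v \in C_t(I; L^2)$ gives conservation of energy \eqref{eq:energy}. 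The main technical hurdle lies in the first step: establishing the smoothed equation for $v_s$ in a sense strong enough to permit the classical energy manipulations. The duality extension of the smoothing operator together with property (3) is what makes this step tractable.
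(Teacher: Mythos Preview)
Your argument is correct, and the overall strategy---smooth, kill the pressure and the self-transport by divergence-freeness, identify the residual error as the commutator---coincides with the paper's. The implementation differs in how the energy identity is justified. The paper mollifies in time as well, writing
\[
-\iint \eta'(t)\,\tfrac{|v|^2}{2}\,\ud^{1+d}\V \;=\; -\lim_{s\to 0}\lim_{\dlt\to 0}\iint \eta'(t)\,\tfrac{|\calS[s][v]_\dlt|^2}{2}\,\ud^{1+d}\V,
\]
and then plugs the admissible $C_c^\infty$ test function $[\eta(t)\,\calS[s]^2[v_\ell]_\dlt]_\dlt$ directly into the weak formulation \eqref{eq:weakEuler}, removing the time mollifier afterwards. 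Your route instead upgrades $v_s$ to an absolutely continuous $L^2$-valued curve (by showing $\rd_t v_s \in L^1_{\mathrm{loc}}(I;L^2)$ via property~(3) of Definition~\ref{def:smthOp}) and then runs the classical pointwise-in-$t$ energy computation. You trade the double limit for a bit of Banach-space machinery; either works, and both hinge on the same two cancellations. One small quibble: Banach--Steinhaus is cleanest applied along sequences $s_n\to 0$, since property~(2) of Definition~\ref{def:smthOp} only gives pointwise boundedness of $\{\calS[s_n]u\}_n$ via convergence, not obviously of the full family $\{\calS[s]u\}_{s\in(0,1]}$; but since the limit $s\to 0$ is determined by sequences this is harmless.
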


\begin{remark} 
According to Theorem \ref{thm:abstractOnsager}, the previous proofs on $\bbT^{d}$ or $\bbR^{d}$ may be viewed as choosing $\calS[s]$ to be the standard convolution mollifier $s^{-d} \varphi(\cdot /s) \ast $ and $X = L^{3}_{t}(I; B^{\alp}_{3, \infty})$ with $\alp > 1/3$ (see \cite{MR1298949}) or $L^{3}_{t}(I, B^{1/3}_{3, c(\bbN)})$ (see \cite{MR2422377}). 
As discussed in the Introduction, such approaches rely heavily on the translational symmetry of these spaces and are therefore difficult to generalize to general manifolds. 
Instead, we shall take $\calS[s] = e^{s \Hlap}$ (see Section \ref{sec:HHF}) and $X = L^{3}_{t} (I; B^{1/3}_{3, c(\bbN)}(M))$ as in Definition \ref{def:besov}.
\end{remark}

\begin{proof} 
The space-time integrals below are all taken over $\bbR \times M$.
We denote by $[v]_{\dlt} := \varphi_{\dlt} \ast_{t} v$ a standard mollification in the time-variable, where $\varphi$ is a smooth compactly supported function on $\bbR$ with $\int \varphi(t) \, \ud t= 1$. 

As discussed above, it suffices to establish \eqref{eq:energy2}.
Using the fact $v^{\ell} \in C_{t} (I; L^{2}(M))$ and properties of $\calS[s]$, we have
\begin{equation*}
	- \iint \eta'(t) \frac{\abs{v}^{2}}{2} \, \ud^{1+d} \V
	= - \lim_{s \to 0} \lim_{\dlt \to 0} \iint \eta'(t) \frac{\abs{\calS[s] [v]_{\dlt}}^{2}}{2} \, \ud^{1+d} \V
\end{equation*}

Using the self-adjointness of $\calS[s]$ and $[\cdot]_{\dlt}$, it is not difficult to prove
\begin{equation*}
- \iint \eta'(t) \frac{\abs{\calS[s] [v]_{\dlt}}^{2}}{2} \, \ud^{1+d} \V
= - \iint  v^{\ell} \rd_{t} [\eta(t) \calS[s]^{2} [v_{\ell}]_{\dlt}]_{\dlt} \, \ud^{1+d} \V
\end{equation*}

Note that $[\eta(t) \calS[s]^{2} [v_{\ell}]_{\dlt}]_{\dlt}$ is a valid test function\footnote{In the case of $\bbR^{d}$, when $\calS[s]$ is the standard heat semi-group, we need an additional approximation of $v^{\ell}$ by divergence-free Schwartz vector fields to ensure that $[\eta(t) \calS[s]^{2} [v_{\ell}]_{\dlt}]_{\dlt}$ is a valid Schwartz test function.}. Thus, we may use the (weak formulation of the) Euler equations to deduce
\begin{align*}
&- \iint \eta'(t) \frac{\abs{\calS[s] [v]_{\dlt}}^{2}}{2} \, \ud^{1+d} \V \\
& \quad = \iint v^{j} v^{\ell} \nb_{j} [\eta(t) \calS[s]^{2} [v_{\ell}]_{\dlt}]_{\dlt} \, \ud^{1+d} \V
+ \iint p \nb^{\ell} [\eta(t) \calS[s]^{2} [v_{\ell}]_{\dlt}]_{\dlt} \, \ud^{1+d} \V.
\end{align*}

Thanks to Property (1) (invariance of divergence), the second integral vanishes. 
Also, using Property (3) of Definition \ref{def:smthOp} and $v^{\ell} \in C_{t} (I; L^{2}(M))$, we can take $\dlt \to 0$ at this point and the first integral converges to 
\begin{equation*}
\iint \eta(t) v^{j} v^{\ell} \nb_{j} \calS[s]^{2} v_{\ell} \, \ud^{1+d} \V
\end{equation*} 


On the other hand, again by Property (1) (invariance of divergence), we have
\begin{equation*}
\iint \eta(t) \calS[s] v^{j} \calS[s] v^{\ell} \nb_{j} \calS[s] v_{\ell} \, \ud^{1+d} \V
= \frac{1}{2} \iint \eta(t) \calS[s] v^{j} \nb_{j} (\calS[s] v^{\ell}  \calS[s] v_{\ell}) \, \ud^{1+d} \V
= 0.
\end{equation*}

Subtracting the above quantity, we finally obtain
\begin{equation*}
- \iint \eta'(t) \frac{\abs{v}^{2}}{2} \, \ud^{1+d} \V
= \lim_{s \to 0} \iint \eta(t) \bb( v^{j} v^{\ell} \nb_{j} \calS[s]^{2} v_{\ell} - \calS[s] v^{j} \calS[s] v^{\ell} \nb_{j} \calS[s] v_{\ell} \bb) \, \ud^{1+d} \V.
\end{equation*}

Now applying Property (2) (commutator estimate), we conclude \eqref{eq:energy2}. \qedhere
\end{proof}

\section{Construction of $\mathcal{S}[s]$ via the Hodge heat flow} \label{sec:HHF}
Consider the Hodge Laplacian on 1-forms, which is defined by
\begin{equation} \label{eq:Hlap:form}
\Hlap \omg := - (\ud \dlt + \dlt \ud ) \omg_{\ell},
\end{equation}
where $\ud$, $\dlt$ are the exterior differential and co-differential operators, respectively. Note that \eqref{eq:Hlap:form} also defines the Hodge Laplacian for any $k$-form, and in particular for scalar functions, which are $0$-forms.

By the Weitzenb\"ock formula, $\Hlap$ takes the following tensorial form:
\begin{equation*} 
	\Hlap \omg_{\ell} := \nb_{j} \nb^{j} \omg_{\ell} - \tensor{\Ric}{^{k}_{\ell}} \omg_{k}.
\end{equation*}

From this expression, we can read off its formal adjoint for vector fields on $M$, which we shall denote also by $\Hlap$:
\begin{equation} \label{eq:Hlap:vf}
	\Hlap u^{\ell} := \nb_{j} \nb^{j} u^{\ell} - \tensor{\Ric}{^{\ell}_{k}} u^{k}.
\end{equation}

In fact, \eqref{eq:Hlap:form} and \eqref{eq:Hlap:vf} are just different descriptions of the same object, as they are conjugate to each other by index raising/lowering (i.e., metric duality). It turns out that \eqref{eq:Hlap:form} is useful for understanding delicate structural properties (see e.g. Proposition \ref{prop:invDiv}), whereas \eqref{eq:Hlap:vf} is more convenient for doing estimates (see Theorem \ref{thm:commEst}).

On a complete Riemanian manifold, it is well-known that $\ud \dlt + \dlt \ud$ is self-adjoint with respect to the $L^{2}(M)$ bilinear product on forms (defined using $g_{jk}$); see \cite[Theorem 2.4]{Strichartz:1983uw}. It therefore follows that $\lap_{H}$ as in \eqref{eq:Hlap:vf} is self-adjoint on the space of vector fields in $L^{2}(M)$ as well. Using $L^{2}(M)$ spectral theory, one can construct the heat semi-group $e^{s \Hlap}$ associated to the Hodge heat flow
\begin{equation} \label{eq:HHF} \tag{HHF}
	(\rd_{s} - \Hlap) U^{\ell} = 0.
\end{equation}

Some basic properties of $e^{s \Hlap}$ are in order.
\begin{lemma} \label{lem:props4heatFlow}
Let $u^{\ell} \in L^{2}(M)$. Then the following statements hold.
\begin{enumerate}
\item For every $s_{0} > 0$, $e^{s \Hlap} u^\ell$ is the unique solution to \eqref{eq:HHF} on $M \times (0, s_{0})$ such that $e^{s \Hlap} \in C_{s}((0, s_{0}); L^{2}(M))$ and $\lim_{s \to 0} e^{s \Hlap} u^\ell = u^{\ell}$ in $L^{2}(M)$. 
\item For every $s >0$, we have $\nrm{e^{s \Hlap} u}_{L^{2}(M)} \leq \nrm{u}_{L^{2}(M)}$.
\item $e^{s \Hlap}$ is a semi-group, i.e., for every $s_{1}, s_{2} > 0$, $e^{(s_{1} + s_{2}) \Hlap} = e^{s_{1} \Hlap} e^{s_{2} \Hlap}$.
\item For $s \in (0, 1]$ and $L \in \bbZ$, we have
\begin{equation} \label{eq:L2smth}
	\nrm{\nb^{(L)}_{j_{1} \cdots j_{L}} (e^{s \Hlap} u^\ell)}_{L^{2}(M)} \leq C_{s, L} \nrm{u}_{L^{2}(M)}.
\end{equation}

The constant $C_{s, L}$ is uniform on any compact subset of $(0, 1]$. Moreover, by Sobolev, it follows that $\nb^{(L)}_{j_{1} \cdots j_{L}} (e^{s \Hlap} u^\ell) \in C^{0}(M)$ as well.

\end{enumerate}
\end{lemma}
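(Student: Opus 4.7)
The plan is to treat $-\Hlap = d\delta + \delta d$ (acting on $1$-forms, or equivalently on vector fields via the musical isomorphism) as a nonnegative self-adjoint operator on $L^{2}(M)$, using the result cited from \cite{Strichartz:1983uw}, and then deduce all four statements from the spectral theorem together with standard elliptic regularity on a compact manifold. Concretely, by the spectral theorem one has a projection-valued measure $dE_\lambda$ supported in $[0, \infty)$ with $-\Hlap = \int_{0}^{\infty} \lambda \, dE_{\lambda}$, and we define $e^{s\Hlap} = \int_{0}^{\infty} e^{-s\lambda} \, dE_{\lambda}$ on $L^{2}(M)$.

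For (2), the contraction bound $\nrm{e^{s\Hlap} u}_{L^{2}} \le \nrm{u}_{L^{2}}$ follows instantly from $|e^{-s\lambda}| \le 1$ on $[0, \infty)$ and Plancherel for the spectral measure. For (3), the semigroup property $e^{(s_1+s_2)\Hlap} = e^{s_1 \Hlap} e^{s_2 \Hlap}$ is just the identity $e^{-(s_1+s_2)\lambda} = e^{-s_1 \lambda} e^{-s_2 \lambda}$ under functional calculus. The existence part of (1), including strong continuity of $s \mapsto e^{s\Hlap} u$ in $L^{2}(M)$ for $s > 0$ and the limit $e^{s\Hlap} u \to u$ as $s \to 0$, follows from dominated convergence applied to the spectral measure. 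For uniqueness, if $U^{\ell} \in C_{s}((0,s_0); L^{2}(M))$ solves \eqref{eq:HHF} and tends to $u$ in $L^{2}$ as $s \to 0$, then on the difference $W := U - e^{s\Hlap} u$ I would test the PDE $(\rd_s - \Hlap) W = 0$ against $W$ itself (justified after a short-time smoothing in $s$ to bring $W$ into the domain of $\Hlap$) and integrate the Hodge Laplacian by parts using \eqref{eq:Hlap:form} to get
\begin{equation*}
  \tfrac{1}{2} \tfrac{d}{ds} \nrm{W}_{L^{2}}^{2} = -\nrm{dW}_{L^{2}}^{2} - \nrm{\delta W}_{L^{2}}^{2} \le 0,
\end{equation*}
which combined with $\nrm{W}_{L^{2}} \to 0$ as $s \to 0$ forces $W \equiv 0$.

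For the smoothing bound (4), the idea is to transfer powers of $(-\Hlap)^{1/2}$ into covariant derivatives. From the spectral calculus and the elementary inequality $\lambda^{m} e^{-s\lambda} \le C_{s,m}$ on $[0,\infty)$ (with $C_{s,m}$ uniform on compact subsets of $(0,1]$), one obtains immediately
\begin{equation*}
  \nrm{(-\Hlap)^{m} e^{s\Hlap} u}_{L^{2}(M)} \le C_{s,m} \nrm{u}_{L^{2}(M)} \qquad \text{for every } m \in \bbN.
\end{equation*}
On the compact manifold $M$, the elliptic operator $-\Hlap$ satisfies elliptic regularity, which I would state as the chain of inequalities $\nrm{\nb^{(L)} v}_{L^{2}(M)} \le C_{L} (\nrm{(-\Hlap)^{m} v}_{L^{2}(M)} + \nrm{v}_{L^{2}(M)})$ valid for $L \le 2m$; this is a consequence of iterated integration by parts via the Weitzenböck formula \eqref{eq:Hlap:vf} combined with control of lower-order curvature contributions (which is where the compactness of $M$ enters in a quantitative way). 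Taking $m$ large enough and combining with the spectral bound and the contraction (2) applied to the zeroth-order term gives the $L^{2}$ estimate in \eqref{eq:L2smth}, with constants uniform on compact subsets of $(0,1]$. The $C^{0}$ bound then follows by applying the same inequality to $L + L_0$ derivatives for $L_0 > d/2$ and invoking Sobolev embedding on the compact manifold $M$.

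The only step requiring genuine work is the elliptic estimate linking $\nb^{(L)}$ to powers of $\Hlap$; once this is granted, the rest is a direct application of the spectral theorem. I expect this to be the main obstacle, but the paper anticipates it by devoting Appendix \ref{sec:heatFlowEst} to short-time $L^{p}$ estimates for \eqref{eq:HHF} on general manifolds, so I would simply refer to those estimates (specialized to $p=2$) to supply the bound.
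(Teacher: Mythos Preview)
Your proposal is correct. For parts (1)--(3) you unwind the spectral-theoretic construction directly, which is precisely what the reference \cite{Strichartz:1983uw} does; the paper simply cites that reference, so your treatment is essentially the same, just made explicit.

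For part (4) there is a genuine, if minor, difference in route. The paper establishes \eqref{eq:L2smth} by the \emph{parabolic} energy method: one commutes $\nb^{(L)}$ through \eqref{eq:HHF}, derives a Bochner-type identity for $\abs{\nb^{(L)} U}^{2}$, and integrates in $s$ (this is exactly the $p=2$ case of Appendix~\ref{sec:heatFlowEst}, iterated in $L$). Your primary argument is instead \emph{elliptic}: you first obtain $\nrm{(-\Hlap)^{m} e^{s\Hlap} u}_{L^{2}} \le C_{s,m} \nrm{u}_{L^{2}}$ from the functional calculus, and then invoke elliptic regularity for the Hodge Laplacian on the compact manifold to trade powers of $-\Hlap$ for covariant derivatives. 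Both arguments are valid; your spectral/elliptic route is arguably cleaner on a compact manifold, while the paper's parabolic method has the advantage (noted in the footnote) of going through under only $L^{\infty}$ curvature bounds on a complete manifold, without appealing to global elliptic regularity. Since you also point to Appendix~\ref{sec:heatFlowEst} as a fallback, you have in effect covered both approaches.
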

\begin{proof} 
All statements except the last can be read off from \cite[\S 3]{Strichartz:1983uw}.
The last statement follows by a standard energy integral method\footnote{We remark that the case $L=1$ of \eqref{eq:L2smth} is established in Appendix \ref{sec:heatFlowEst}, assuming only $L^{\infty}(M)$ bounds on the curvature.  The same argument can be extended to the case $L \geq 2$ of \eqref{eq:L2smth} by commuting more derivatives into \eqref{eq:HHF}.} and Sobolev. 
\qedhere
\end{proof}

In view of the preceding lemma, we choose our smoothing operator to be $e^{s \Hlap}$, i.e.,
\begin{equation}
	\calS[s] := e^{s \Hlap} \hbox{ for } s \in (0,1].
\end{equation}

A remarkable property of $\Hlap$, also inherited by $e^{s \Hlap}$, is that it commutes with the divergence operator.  This is most easily seen using \eqref{eq:Hlap:form}.
\begin{proposition}[Invariance of divergence] \label{prop:invDiv}
The following statements hold.
\begin{enumerate}
\item For every $\omg_{\ell} \in L^{2}(M)$, $\dlt \Hlap \omg = \Hlap \dlt \omg$ and $\dlt e^{s\Hlap} \omg = e^{s\Hlap} \dlt \omg$ for every $s \geq 0$.
\item For every $u^{\ell} \in L^{2}(M)$, $\nb_{\ell} \Hlap u^{\ell} = \Hlap \nb_{\ell} u^{\ell}$ and $\nb_{\ell} e^{s\Hlap} u^{\ell} = e^{s\Hlap} \nb_{\ell} u^{\ell}$ for every $s \geq 0$. 
\end{enumerate}

In particular, $\calS[s] = e^{s \Hlap}$ satisfies Property (1) of Theorem \ref{thm:abstractOnsager}.
\end{proposition}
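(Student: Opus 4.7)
The plan is to first verify the commutation purely algebraically on smooth forms using the Hodge decomposition $\Hlap = -(\ud\dlt + \dlt \ud)$ together with $\ud^{2} = 0$ and $\dlt^{2} = 0$, then propagate it to the semi-group by a uniqueness-of-heat-flow argument, and finally extend it from smooth to $L^{2}(M)$ inputs by duality.

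First, on smooth forms $\phi$ I compute
\begin{equation*}
\dlt \Hlap \phi = -\dlt(\ud\dlt + \dlt\ud)\phi = -\dlt\ud\dlt \phi, \qquad \Hlap \dlt \phi = -(\ud\dlt + \dlt\ud)\dlt \phi = -\dlt \ud \dlt \phi,
\end{equation*}
and symmetrically $\ud \Hlap\phi = \Hlap \ud \phi = -\ud\dlt\ud\phi$. This establishes the commutation with $\Hlap$ on smooth forms of arbitrary degree (in particular on $0$-forms and $1$-forms). To pass to the semi-group on smooth input, I would fix a smooth $\phi \in C^{\infty}_{c}(M)$ and set $w(s) := \dlt e^{s\Hlap}\phi - e^{s\Hlap} \dlt \phi$. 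By Lemma \ref{lem:props4heatFlow}(4) the function $e^{s\Hlap}\phi$ is smooth on $M$ for $s > 0$, so $w$ is a well-defined $L^{2}$ form that is continuous down to $s = 0$ with $w(0) = 0$, and $(\rd_{s} - \Hlap) w = \dlt \Hlap e^{s\Hlap}\phi - \Hlap \dlt e^{s\Hlap}\phi = 0$ by the algebraic step. The uniqueness statement in Lemma \ref{lem:props4heatFlow}(1) then forces $w \equiv 0$, i.e.\ $\dlt e^{s\Hlap}\phi = e^{s\Hlap}\dlt\phi$ for all $s \geq 0$. The same argument gives $\ud e^{s\Hlap}\phi = e^{s\Hlap}\ud \phi$.

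For (1) with $\omg \in L^{2}(M)$, I interpret $\dlt \omg$ as a distribution via $\langle \dlt \omg, \phi\rangle := \langle \omg, \ud \phi\rangle$ for smooth compactly supported test forms $\phi$, and interpret $e^{s\Hlap}$ on distributions by duality (well-defined since $e^{s\Hlap}$ is self-adjoint on $L^{2}$). Testing against smooth compactly supported $\phi$ and using the $L^{2}$ self-adjointness of $e^{s\Hlap}$ together with the smooth commutation above,
\begin{align*}
\langle \dlt e^{s\Hlap}\omg, \phi\rangle
&= \langle e^{s\Hlap}\omg, \ud\phi\rangle = \langle \omg, e^{s\Hlap}\ud\phi\rangle \\
&= \langle \omg, \ud e^{s\Hlap}\phi\rangle = \langle \dlt\omg, e^{s\Hlap}\phi\rangle = \langle e^{s\Hlap}\dlt\omg, \phi\rangle,
\end{align*}
which yields $\dlt e^{s\Hlap}\omg = e^{s\Hlap}\dlt\omg$ distributionally, and hence $\dlt \Hlap \omg = \Hlap \dlt \omg$ by differentiating at $s=0$ (or by the analogous direct duality computation). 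Statement (2) follows from (1) by metric duality, since for a $1$-form $\omg_{\ell}$ one has $\dlt \omg = -\nb^{\ell}\omg_{\ell}$, so lowering the index of $u^{\ell}$ converts all the identities in (1) into the corresponding identities for $\nb_{\ell}$ acting on vector fields. Finally, if $u^{\ell} \in L^{2}(M)$ satisfies $\nb_{\ell} u^{\ell} = 0$ in $\calD'(M)$, then $\nb_{\ell} e^{s\Hlap} u^{\ell} = e^{s\Hlap}\nb_{\ell} u^{\ell} = 0$, confirming Property (1) of Theorem \ref{thm:abstractOnsager}.

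The only genuine subtlety I anticipate is justifying the duality extension in Step 3: one must take care that $\dlt\omg$ for $\omg \in L^{2}$ is \emph{a priori} only a distribution (not in $L^{2}$), and that $e^{s\Hlap}$ on distributions is defined through its self-adjoint $L^{2}$ action composed with the isomorphism between smooth forms and their duals. Once one commits to this interpretation (which is forced by the statement of the proposition), the smooth commutation identities propagate essentially automatically.
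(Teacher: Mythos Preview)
Your proposal is correct and follows essentially the same route as the paper: the algebraic commutation $\dlt\Hlap = \Hlap\dlt$ on smooth forms via $\dlt^{2}=0$, propagation to the semi-group, reduction of (2) to (1) by metric duality, and extension to $L^{2}$ data in the sense of distributions. The only minor difference is that the paper extends to $L^{2}$ by approximation rather than by your duality argument (which in turn requires the companion identity $\ud\, e^{s\Hlap}\phi = e^{s\Hlap}\ud\phi$ on test forms); both are equally valid and your concern about the ``genuine subtlety'' is adequately addressed by the interpretation you give.
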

\begin{proof} 
The second statement is equivalent to the first by index raising/lowering, and the fact that $\dlt$ conjugates to the divergence operator for vector fields, i.e., $\dlt \omg = - \nb^{\ell} \omg_{\ell}$. Given $\omg_{\ell} \in C^{\infty}_{c}(M)$, the identity $\dlt \Hlap \omg = \Hlap \dlt \omg$ is obvious from \eqref{eq:Hlap:form} and $\dlt^{2} = 0$. This identity is then extended to the operator $e^{s \Hlap}$, and for general $\omg \in L^{2}(M)$ by approximation (in the sense of distributions). \qedhere
\end{proof}

\section{Proof of the commutator estimate \eqref{eq:babyCommEst}} \label{sec:commEst}
The main result of this section is
%
\begin{theorem}[Commutator estimate] \label{thm:commEst}
Let $(M, g_{jk})$ be a compact Riemannian manifold, $I \seq \bbR$ an open interval and $\alp \geq 1/3$. Then for every $u^{\ell} \in L^{3}_{t}(I; B^{\alp}_{3, c(\bbN)}(M))$ and smooth function $\eta(t)$ supported on $I$, we have
\begin{equation} \label{eq:commEst}
\begin{aligned}
	&\bb\vert \iint_{I \times M} \eta(t) \bb( e^{s \Hlap} \nb_{j}(u^{j} u^{\ell}) - \nb_{j} \big( e^{s \Hlap} u^{j} \, e^{s \Hlap} u^{\ell} \big) \bb) e^{s \Hlap} u_{\ell} \, \ud^{1+d} \V \bb\vert 
	\to 0 \hbox{ as } s \to 0.
\end{aligned}
\end{equation}
\end{theorem}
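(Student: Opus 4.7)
The strategy, as hinted in the Introduction, is to derive a parabolic PDE for the commutator itself and then apply Duhamel's principle, bypassing any delicate mollification analysis. Let $U^{\ell}(s) := e^{s \Hlap} u^{\ell}$, which by Lemma \ref{lem:props4heatFlow}(4) and Sobolev embedding is a smooth vector field for every $s > 0$. Define
\[
	C^{\ell}(s) := e^{s \Hlap} \nb_j (u^j u^\ell) - \nb_j (U^j U^\ell),
\]
as a distribution on $I \times M$; for $s > 0$ it is actually a smooth vector field, while one easily verifies that $C^{\ell}(s) \to 0$ as $s \to 0^{+}$ in the sense of distributions, since both summands converge to $\nb_j(u^j u^\ell)$.

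The first summand of $C^\ell(s)$ is annihilated by $\rd_s - \Hlap$, so a direct computation using the Bochner form $\Hlap u^{\ell} = \nb^{k} \nb_{k} u^{\ell} - \tensor{\Ric}{^{\ell}_{m}} u^{m}$, the elementary product-rule identity $\nb^{k} \nb_{k} (U^j U^\ell) = (\nb^{k} \nb_{k} U^j) U^\ell + 2 \nb^{k} U^j \nb_{k} U^\ell + U^j \nb^{k} \nb_{k} U^\ell$, and the commutator $[\nb^{k} \nb_{k}, \nb_j]$ applied to the 2-tensor $U^j U^\ell$ (which produces curvature corrections) yields
\[
	(\rd_s - \Hlap) C^{\ell}(s) = 2 \nb_j \bb( \nb^{k} U^j \nb_{k} U^\ell \bb) + \calE^{\ell}(s),
\]
where the error $\calE^{\ell}(s)$ collects finitely many terms of first order in $\nb U$ contracted with $\Ric$, $\Riem$, or their first covariant derivatives. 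Since $C^{\ell}(0) = 0$ distributionally, Duhamel's principle gives
\[
	C^{\ell}(s) = \int_{0}^{s} e^{(s-s') \Hlap} \bb( 2 \nb_j (\nb^{k} U^j \nb_{k} U^\ell)(s') + \calE^{\ell}(s') \bb) \, \ud s'.
\]

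To estimate $\iint_{I \times M} \eta(t) C^{\ell}(s) U_\ell(s) \, \ud^{1+d} \V$ for the main term, I use the self-adjointness of $e^{(s-s')\Hlap}$ on vector fields together with the semi-group identity $e^{(s-s')\Hlap} U_\ell(s) = U_\ell(2s-s')$ to shift the heat operator onto $U_\ell(s)$, then integrate by parts on the closed manifold $M$, obtaining
\[
	-2 \int_{0}^{s} \iint_{I \times M} \eta(t) \, \nb^{k} U^j(s') \, \nb_{k} U^\ell(s') \, \nb_j U_\ell(2s-s') \, \ud^{1+d} \V \, \ud s'.
\]
H\"{o}lder's inequality in $(t, x)$ with three factors of $L^{3}$ bounds this by a constant times
\[
	\nrm{\eta}_{\infty} \int_{0}^{s} \frac{F(s')^{2} F(2s-s')}{(s')^{2/3} (2s-s')^{1/3}} \, \ud s', \qquad F(s) := s^{1/3} \nrm{\nb U(s)}_{L^{3}_{t}(\supp \eta;\, L^{3}_{x})}.
\]
The substitution $s' = su$ converts the $s'$-integral into $\int_{0}^{1} u^{-2/3}(2-u)^{-1/3} \, \ud u < \infty$, so the whole expression is controlled by $C \nrm{\eta}_{\infty} (\sup_{s'' \in (0, 2s]} F(s''))^{3}$. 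For $u \in L^{3}_{t}(I; B^{1/3}_{3, c(\bbN)}(M))$ one has $\lim_{s \to 0} F(s) = 0$, so this vanishes as $s \to 0$; here the critical exponent $\alp = 1/3$ and the $c(\bbN)$ summability both enter essentially.

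The contribution from $\calE^{\ell}$ is strictly subcritical: using boundedness of curvature on the compact manifold $M$ and the short-time $L^{3}$-boundedness of $e^{s \Hlap}$ (Appendix \ref{sec:heatFlowEst}), the worst $s'$-integrand after pairing and integrating by parts is of order $(s')^{-1/3}$, so its $s'$-integral is $O(s^{2/3})$, which vanishes as $s \to 0$. The principal obstacle is the careful derivation and bookkeeping of the PDE above: one must check that \emph{every} term in $\calE^{\ell}$ is genuinely first-order in $\nb U$, i.e., that no curvature-dependent correction matches the critical bilinear scaling of $2 \nb_j (\nb^{k} U^j \nb_{k} U^\ell)$. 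Once this is verified, the whole argument reduces to a clean Duhamel--H\"{o}lder estimate that reveals the cancellation structure of \cite{MR1298949, MR2422377} in an entirely intrinsic, geometric way.
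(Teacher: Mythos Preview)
Your proposal is correct and follows essentially the same route as the paper: derive the parabolic equation $(\rd_s - \Hlap)C^\ell = 2\nb_j(\nb^k U^j \nb_k U^\ell) + \text{(curvature corrections)}$, apply Duhamel from $C^\ell(0)=0$, shift $e^{(s-s')\Hlap}$ onto $U_\ell(s)$ via self-adjointness and the semi-group law to produce $U_\ell(2s-s')$, integrate by parts, apply H\"older, and rescale the $s'$-integral. The only places where the paper is more careful are (i) the explicit form of the curvature error, written there as $(\calR_-)^\ell_{mjk}\nb^m(U^jU^k) + \nb^m[(\calR_+)^\ell_{mjk}U^jU^k]$ (note the second piece is kept in divergence form so that after integration by parts no $\nb\Riem$ appears and only $\nrm{\Riem}_{L^\infty}$ enters), and (ii) a short lemma justifying Duhamel rigorously by showing $e^{(s-\eps)\Hlap}C^\ell(\eps)\to 0$ using the $L^2$-uniqueness of the Hodge heat flow---both points you should tighten in a final write-up, but neither is a gap in the argument.
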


This theorem immediately implies that $\calS[s] = e^{s \Hlap}$ satisfies Property (2) of Theorem \ref{thm:abstractOnsager} (commutator estimate), with $X = L^{3}_{t}(I; B^{1/3}_{3, c(\bbN)}(M))$. Combined with Proposition \ref{prop:invDiv}, this concludes the proof of Theorem \ref{thm:Onsager:difficult}.

For $t \in I$ and $s \in (0,1]$, we define the commutator
\begin{equation*}
W^{\ell}(t, s) := e^{s \Hlap} \nb_{j}(u^{j}(t) u^{\ell}(t)) - \nb_{j}(e^{s \Hlap} u^{j}(t) e^{s \Hlap} u^{\ell}(t)).
\end{equation*}

In the rest of this section, we shall often omit writing $t \in I$ and use the shorthand $U^{\ell}(s) := e^{s \Hlap} u^{\ell}$. 

It is easy to show that $W^{\ell}(s)$ converges to $0$ as $s \to 0$ in the sense of distributions; Theorem \ref{thm:commEst} upgrades this to a stronger statement \eqref{eq:commEst}. Our key idea is to derive a parabolic PDE for $W^{\ell}$ by applying $\rd_{s} - \Hlap$, and then to use Duhamel's principle. An interesting fact is that the seemingly naive act of computing $\rd_{s} - \Hlap$ of $W^{\ell}$ already reveals a structure analogous to that behind the delicate commutator estimate of Constantin-E-Titi \cite[Eq. (10)]{MR1298949} (see also \cite[Eq. (20)]{MR2422377}). 

The following simple lemma is crucial for getting the desired vanishing of $W^{\ell}(s)$ as $s \to 0$. It may also be interpreted as making precise the difference between $B^{\alp}_{p, \infty}(M)$ and $B^{\alp}_{p, c(\bbN)}(M)$. 
\begin{lemma} \label{lem:cNBesov}
Let $0 < \alp < 1$ and $2 \leq p < \infty$. Then for $u^{\ell} \in B^{\alp}_{p, c(\bbN)}(M)$, we have
\begin{equation} \label{eq:cNBesov}
	s^{\frac{1-\alp}{2}} \nrm{\nb U(s)}_{L^{p}(M)} \to 0 \hbox{ as } s \to 0.
\end{equation}
\end{lemma}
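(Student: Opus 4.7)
The plan is a standard density argument: reduce \eqref{eq:cNBesov} to smooth compactly supported vector fields (for which the claim is immediate) and absorb the approximation error directly into the defining norm of $B^{\alp}_{p, \infty}(M)$ from \eqref{eq:besovInfty}.

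First, for any $u_{n}^{\ell} \in C^{\infty}_{c}(M)$, the short-time parabolic $L^{p}$ estimate for \eqref{eq:HHF} established in Appendix \ref{sec:heatFlowEst} gives
\begin{equation*}
	\nrm{\nb e^{s \Hlap} u_{n}}_{L^{p}(M)} \leq C \nrm{u_{n}}_{W^{1, p}(M)}, \qquad s \in (0, 1].
\end{equation*}
Since $(1 - \alp)/2 > 0$ and the right-hand side is finite and $s$-independent, it follows at once that $s^{\frac{1-\alp}{2}} \nrm{\nb e^{s \Hlap} u_{n}}_{L^{p}(M)} \to 0$ as $s \to 0$.

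Now let $u^{\ell} \in B^{\alp}_{p, c(\bbN)}(M)$ and $\eps > 0$. By Definition \ref{def:besov} we may pick $u_{n}^{\ell} \in C^{\infty}_{c}(M)$ with $\nrm{u - u_{n}}_{B^{\alp}_{p, \infty}(M)} < \eps/2$. Directly from the definition \eqref{eq:besovInfty} extended to the completion by continuity, we have the key bound
\begin{equation*}
	\sup_{s \in (0, 1]} s^{\frac{1-\alp}{2}} \nrm{\nb e^{s \Hlap} (u - u_{n})}_{L^{p}(M)} \leq \nrm{u - u_{n}}_{B^{\alp}_{p, \infty}(M)} < \eps/2.
\end{equation*}
The triangle inequality combined with the first step yields
\begin{equation*}
	s^{\frac{1-\alp}{2}} \nrm{\nb U(s)}_{L^{p}(M)} \leq \tfrac{\eps}{2} + s^{\frac{1-\alp}{2}} \nrm{\nb e^{s \Hlap} u_{n}}_{L^{p}(M)} < \eps
\end{equation*}
for all sufficiently small $s$. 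Since $\eps > 0$ is arbitrary, this gives \eqref{eq:cNBesov}.

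The proof is essentially formal once the short-time bound for smooth $u_{n}$ is in hand; the only mildly delicate point is justifying the definition-based inequality for the completion element $u^{\ell}$, which requires that $\nb e^{s \Hlap}$ be defined and bounded on $B^{\alp}_{p, c(\bbN)}(M)$ for each fixed $s \in (0, 1]$. This is immediate because the $L^{p}(M)$ norm is embedded in \eqref{eq:besovInfty} and $\nb e^{s \Hlap}$ is bounded on $L^{p}(M)$ for any fixed $s \in (0, 1]$ by standard short-time parabolic estimates on the compact manifold $M$. The hypothesis $2 \leq p < \infty$ is used only so that the $L^{2}(M)$-theoretic semigroup $e^{s \Hlap}$ constructed in Lemma \ref{lem:props4heatFlow} acts unambiguously on $L^{p}(M) \seq L^{2}(M)$.
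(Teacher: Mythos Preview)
Your argument is correct and follows essentially the same approach as the paper: both use the short-time estimate \eqref{eq:heatFlowEst:Lp:2} to verify \eqref{eq:cNBesov} for $u_{n}^{\ell} \in C^{\infty}_{c}(M)$, and then invoke density together with the fact that the $B^{\alp}_{p,\infty}(M)$ norm dominates $\sup_{s \in (0,1]} s^{\frac{1-\alp}{2}} \nrm{\nb e^{s\Hlap}(\cdot)}_{L^{p}(M)}$ to pass to general $u^{\ell} \in B^{\alp}_{p, c(\bbN)}(M)$. Your version is simply a more explicit rendering of the same $\eps/2$ density argument the paper sketches in two sentences.
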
 
\begin{proof} 
This lemma follows from the following estimate, whose proof will be provided in Appendix \ref{sec:heatFlowEst}: For $2 \leq p < \infty$, $0 < s \leq 1$ and $u^{\ell} \in C^{\infty}_{c}(M)$, we have
\begin{equation*} \tag{\ref{eq:heatFlowEst:Lp:2}}
\nrm{\nb U(s)}_{L^{p}(M)} \leq C (\nrm{\nb u}_{L^{p}(M)} + \nrm{u}_{L^{p}(M)} ).
\end{equation*}

Indeed, note that the statement \eqref{eq:cNBesov} is preserved under taking limits with respect to the $B^{\alp}_{p, \infty}(M)$ norm; thus, it suffices to prove \eqref{eq:cNBesov} on a dense subset of $B^{\alp}_{p, c(\bbN)}(M)$. By \eqref{eq:heatFlowEst:Lp:2}, the statement \eqref{eq:cNBesov} is trivial for $u^{\ell} \in C^{\infty}_{c}(M)$, which is dense in $B^{\alp}_{p, c(\bbN)}(M)$ by definition. \qedhere
\end{proof}

\begin{remark} 
From Lemma \ref{lem:cNBesov}, it follows by the dominated convergence theorem that if $u^{\ell}(t)$ belongs to $L^{3}_{t}(I; B^{\alp}_{3, c(\bbN)}(M))$ ($0 < \alp < 1$), then $s^{\frac{1-\alp}{2}} \nrm{\nb U(s)}_{L^{3}(I \times M)} \to 0$ as $s \to 0$.

\end{remark}

The following lemma gives the parabolic PDE satisfied by $W^{\ell}$.
\begin{lemma} \label{lem:eq4W}
Let $W^{\ell}$ be defined as above and $U^{\ell}$ a solution to $(\rd_{s} - \Hlap) U^{\ell} = 0$. Then $W^{\ell}$ satisfies (in the sense of distributions)
\begin{equation} \label{eq:eq4W}
(\rd_{s} - \Hlap )W^{\ell}
 = 	2 \nb_{j} (\nb_{k} U^{j} \nb^{k} U^{\ell})  
 	+ \tensor{(\calR_{-})}{^{\ell}_{m jk}} \nb^{m} (U^{j} U^{k}) 
	 + \nb^{m} \big[ \tensor{(\calR_{+})}{^{\ell}_{m jk}} U^{j} U^{k} \big],
\end{equation}
where $\tensor{(\calR_{\pm})}{^{\ell}_{m j k}} := \tensor{\Riem}{_{m j}^{\ell}_{k}} \pm g_{m j} \tensor{\Ric}{^{\ell}_{k}}$
\end{lemma}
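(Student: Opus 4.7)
The plan is to apply the parabolic operator $\rd_{s} - \Hlap$ directly to $W^{\ell}$ and derive the identity term by term. Since $e^{s \Hlap} \nb_{j}(u^{j} u^{\ell})$ solves the Hodge heat flow with $s$-independent initial datum $\nb_{j}(u^{j} u^{\ell})$, one has $(\rd_{s} - \Hlap)[e^{s\Hlap}\nb_{j}(u^{j}u^{\ell})]=0$ in the sense of distributions, so everything reduces to computing $-(\rd_{s} - \Hlap)\nb_{j}(U^{j} U^{\ell})$.

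The first step is to work at the level of the $(2,0)$-tensor $T^{j\ell} := U^{j} U^{\ell}$ before contracting. The Leibniz rule, combined with the evolution equation $\rd_{s} U^{\ell} = \Hlap U^{\ell} = \nb^{m}\nb_{m} U^{\ell} - \tensor{\Ric}{^{\ell}_{k}} U^{k}$, yields the clean pointwise identity
\begin{equation*}
\rd_{s} T^{j\ell} - \nb^{m}\nb_{m} T^{j\ell} = - 2\nb^{m} U^{j}\nb_{m} U^{\ell} - \tensor{\Ric}{^{j}_{k}} U^{k} U^{\ell} - \tensor{\Ric}{^{\ell}_{k}} U^{j} U^{k}.
\end{equation*}
Applying $\nb_{j}$ gives the main term $-2\nb_{j}(\nb^{m} U^{j} \nb_{m} U^{\ell})$, which (after the sign flip in the definition of $W^{\ell}$) is exactly the leading contribution $2\nb_{j}(\nb_{k} U^{j} \nb^{k} U^{\ell})$ in \eqref{eq:eq4W}.

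The second step is to convert $\nb_{j}\nb^{m}\nb_{m} T^{j\ell}$ into $\Hlap \nb_{j} T^{j\ell} = \nb^{m}\nb_{m}\nb_{j} T^{j\ell} - \tensor{\Ric}{^{\ell}_{k}}\nb_{j} T^{jk}$, picking up curvature from the commutator $[\nb^{m}\nb_{m}, \nb_{j}]T^{j\ell}$. One application of the Ricci identity (together with the symmetry $\tensor{\Riem}{_{mj}^{j}_{k}} = -\Ric_{mk}$, a direct consequence of the pair symmetry of Riemann and the paper's sign convention for $\Ric$) produces
\begin{equation*}
[\nb_{m}, \nb_{j}] T^{j\ell} = -\Ric_{mk} T^{k\ell} + \tensor{\Riem}{_{mj}^{\ell}_{k}} T^{jk},
\end{equation*}
while the remaining double commutator $[\nb^{m}, \nb_{j}]\nb_{m} T^{j\ell}$ simplifies, after a cancellation between two Ricci terms generated from the three free indices of $\nb_{m} T^{j\ell}$, to the single term $\tensor{\Riem}{_{mj}^{\ell}_{k}}\nb^{m} T^{jk}$.

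The third step is to collect all the pieces. The key simplification is the identity $\nb_{j}(\tensor{\Ric}{^{j}_{k}} T^{k\ell}) = \nb^{m}(\Ric_{mk} T^{k\ell})$, which follows from metric compatibility and exactly cancels the Ricci contribution $-\nb^{m}(\Ric_{mk} T^{k\ell})$ coming out of $[\nb^{m}\nb_{m}, \nb_{j}] T^{j\ell}$. Using metric compatibility again to rewrite $g_{mj}\nb^{m} = \nb_{j}$, the remaining curvature remainders regroup as $\tensor{(\calR_{-})}{^{\ell}_{mjk}} \nb^{m} T^{jk} + \nb^{m}[\tensor{(\calR_{+})}{^{\ell}_{mjk}} T^{jk}]$, the $\pm$ signs recording whether $\nb^{m}$ differentiates the curvature factor or not. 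The main obstacle is precisely this bookkeeping: one must track half a dozen Ricci and Riemann terms produced by several commutations and verify that they fuse into the compact expression $\tensor{(\calR_{\pm})}{^{\ell}_{mjk}} = \tensor{\Riem}{_{mj}^{\ell}_{k}} \pm g_{mj}\tensor{\Ric}{^{\ell}_{k}}$, with no stray scalar-curvature or Bianchi-type residues.
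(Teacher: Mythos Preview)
Your proposal is correct and is exactly the computation the paper has in mind; the paper's own proof is the single word ``Computation.'' Your three-step decomposition (Leibniz on $T^{j\ell}=U^{j}U^{\ell}$, the commutator $[\nb^{m}\nb_{m},\nb_{j}]T^{j\ell}$ with the two Ricci terms cancelling, and the final regrouping via $\nabla_{j}=g_{mj}\nabla^{m}$ into $\calR_{\pm}$) is the natural way to carry it out, and every intermediate identity you state checks.
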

\begin{proof} 
Computation.
\end{proof}

In the following lemma, we justify the use of Duhamel's principle in our situation, which is necessary in view of the well-known non-uniqueness for parabolic equations. The key ingredient is the uniqueness of the Hodge heat flow in $L^{2}(M)$, i.e., (1) in Lemma \ref{lem:props4heatFlow}.

\begin{lemma} \label{lem:Duhamel}
Denote the right-hand side of \eqref{eq:eq4W} by $\calN^{\ell}(t, s)$. For $t \in I$ and $s \in (0,1]$, we have
\begin{equation} \label{eq:Duhamel}
	W^{\ell}(t, s) = \lim_{\eps \to 0} \int_{\eps}^{s} e^{(s-s') \Hlap} \calN^{\ell}(t, s') \, \ud s'.
\end{equation}
where the limit on the right-hand side is in the sense of distributions.
\end{lemma}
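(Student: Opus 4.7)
The plan is to set up a fixed-$\eps$ Duhamel identity on $[\eps, 1]$ via the $L^{2}$-uniqueness statement in Lemma \ref{lem:props4heatFlow}(1), and then to send $\eps \to 0$ using the fact that the commutator $W^{\ell}(t, \eps)$ vanishes in $\calD'(M)$ as $\eps \to 0$.

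As a preliminary I would note that for each $t \in I$ and every $s > 0$, both $W^{\ell}(t, s)$ and the nonlinearity $\calN^{\ell}(t, s)$ from Lemma \ref{lem:eq4W} are smooth in $x$ and depend continuously on $s$ into $L^{2}(M)$. Indeed, $u^{j} u^{\ell} \in L^{3/2}(M)$, so $\nb_{j}(u^{j} u^{\ell})$ is a distribution whose image under $e^{s \Hlap}$ is smooth by Lemma \ref{lem:props4heatFlow}(4); likewise $U^{\ell}(s) = e^{s \Hlap} u^{\ell}$ is smooth for $s > 0$, and $\calN^{\ell}$ is a polynomial expression in $U^{\ell}$ and its covariant derivatives.

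Fix $\eps \in (0, 1)$ and, for $s \in [\eps, 1]$, set
$$Z_{\eps}^{\ell}(t, s) := \int_{\eps}^{s} e^{(s-s') \Hlap} \calN^{\ell}(t, s') \, \ud s'.$$
Standard semigroup calculus shows that $Z_{\eps}^{\ell} \in C_{s}([\eps, 1]; L^{2}(M))$ satisfies $(\rd_{s} - \Hlap) Z_{\eps}^{\ell} = \calN^{\ell}$ in $\calD'$ on $(\eps, 1] \times M$ with $Z_{\eps}^{\ell}(t, \eps) = 0$. Together with Lemma \ref{lem:eq4W}, this means that $V_{\eps}^{\ell} := W^{\ell} - Z_{\eps}^{\ell}$ belongs to $C_{s}([\eps, 1]; L^{2}(M))$, solves the homogeneous Hodge heat equation $(\rd_{s} - \Hlap) V_{\eps}^{\ell} = 0$ in $\calD'$ on $(\eps, 1]$, and at $s = \eps$ coincides with $W^{\ell}(t, \eps) \in L^{2}(M)$. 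The $L^{2}$-uniqueness statement of Lemma \ref{lem:props4heatFlow}(1) then forces
$$W^{\ell}(t, s) = Z_{\eps}^{\ell}(t, s) + e^{(s - \eps) \Hlap} W^{\ell}(t, \eps) \qquad \text{for all } s \in [\eps, 1].$$

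It remains to check that $e^{(s - \eps) \Hlap} W^{\ell}(t, \eps) \to 0$ in $\calD'(M)$ as $\eps \to 0$. Pairing with a smooth $1$-form $\omg$ and using the self-adjointness and semigroup property of $e^{r \Hlap}$ yields
$$\langle e^{(s-\eps) \Hlap} W^{\ell}(t, \eps), \omg \rangle = - \langle u^{j} u^{\ell}, \nb_{j} e^{s \Hlap} \omg_{\ell} \rangle + \langle e^{\eps \Hlap} u^{j} \, e^{\eps \Hlap} u^{\ell}, \nb_{j} e^{(s-\eps) \Hlap} \omg_{\ell} \rangle.$$
Strong continuity of the heat semigroup gives $e^{\eps \Hlap} u^{j} \to u^{j}$ in $L^{3}(M)$ as $\eps \to 0$, while $\nb_{j} e^{(s-\eps) \Hlap} \omg_{\ell} \to \nb_{j} e^{s \Hlap} \omg_{\ell}$ smoothly; hence both terms tend to $-\langle u^{j} u^{\ell}, \nb_{j} e^{s \Hlap} \omg_{\ell} \rangle$, establishing \eqref{eq:Duhamel}. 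The main obstacle is arguably precisely this last step: the naive identity ``$W^{\ell}(t, 0) = 0$'' holds only distributionally because $u^{j} u^{\ell}$ is no better than $L^{3/2}$, so one must move both derivatives onto the smooth test form via self-adjointness before passing to the limit; the rest is routine once the smoothing properties collected in Section \ref{sec:HHF} are in hand.
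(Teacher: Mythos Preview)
Your proof is correct and follows essentially the same route as the paper: both establish the Duhamel identity on $[\eps, 1]$ via the $L^{2}$-uniqueness of Lemma \ref{lem:props4heatFlow}(1), then show the remainder $e^{(s-\eps)\Hlap} W^{\ell}(\eps) \to 0$ in $\calD'(M)$ by pairing with a test form and moving the derivative onto the smooth side. The only cosmetic difference is that you invoke $L^{3}$ strong continuity of $e^{\eps \Hlap}$ (valid since $u \in L^{3}$ in this context), whereas the paper uses only the $L^{2}$ statement from Lemma \ref{lem:props4heatFlow} and argues via $L^{1}$ convergence of $U^{j}(\eps) U^{\ell}(\eps)$.
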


\begin{proof} 
In what follows, we omit writing $t$. We begin by observing that $\int_{\eps}^{s} e^{(s-s') \Hlap} \calN^{\ell}(s') \, \ud s'$ can be put in $C_{t}((0, 1]; L^{2}(M))$, thanks to Lemma \ref{lem:props4heatFlow} (in particular \eqref{eq:L2smth}) and Sobolev. 
We claim that for every $\eps > 0$,
\begin{equation} \label{eq:Duhamel:pf}
	e^{\eps \Hlap} \nb_{j} (u^{j} u^{\ell}) \in L^{2}(M).
\end{equation}

Indeed, for every $\omg_{\ell} \in C^{\infty}_{c}(M)$, by \eqref{eq:L2smth} and Sobolev, we have
\begin{align*}
	\abs{\int_{M} e^{\eps \Hlap} \nb_{j} (u^{j} u^{\ell}) \omg_{\ell} \, \ud^{d} \V}
	=& \abs{\int_{M}  (u^{j} u^{\ell}) \nb_{j} e^{\eps \Hlap} \omg_{\ell} \, \ud^{d} \V} \\
	\leq& C_{\eps} \nrm{u}^{2}_{L^{2}(M)} \nrm{\omg}_{L^{2}(M)},
\end{align*}

It follows that $W^{\ell} \in C_{t}((0, 1], L^{2}(M))$ as well, again using \eqref{eq:L2smth} and Sobolev. Thus, by the uniqueness statement in Lemma \ref{lem:props4heatFlow} and Duhamel's principle, for every $0 < \eps < s < 1$ we have 
\begin{equation*}
W^{\ell}(s) = e^{(s-\eps) \Hlap} W^{\ell}(\eps) + \int_{\eps}^{s} e^{(s-s') \Hlap} \calN^{\ell}(s') \, \ud s'.
\end{equation*}

To prove \eqref{eq:Duhamel}, we need to show $e^{(s-\eps) \Hlap} W^{\ell}(\eps) \to 0$ in the sense of distributions as $\eps \to 0$. 
For $\omg_{\ell} \in C^{\infty}_{c}(M)$, we write
\begin{align*}
	\int_{M} & \omg_{\ell} e^{(s-\eps) \Hlap} W^{\ell}(\eps)  \, \ud^{d} \V \\
	=& - \int_{M} \nb_{j} e^{s \Hlap} \omg_{\ell} \big[ u^{j} u^{\ell} - U^{j}(\eps) U^{\ell}(\eps) \big] \, \ud \V
		- \int_{M} \nb_{j} \big[ (e^{s \Hlap} - e^{(s-\eps) \Hlap}) \omg_{\ell} \big] U^{j}(\eps) U^{\ell}(\eps) \, \ud \V
\end{align*}

The first integral goes to zero as $\eps \to 0$, since $u^{j} u^{\ell} - U^{j}(\eps) U^{\ell}(\eps) \to 0$ in $L^{1}(M)$ by Lemma \ref{lem:props4heatFlow}, and $\nb_{j} e^{s \Hlap} \omg_{\ell} \in C^{0}(M)$ by \eqref{eq:L2smth}.  Moreover, from the $L^2$ convergence of $ (e^{\eps \Hlap} - 1)\omg_{\ell} \to 0$, we can see that $\nb_{j} [ (e^{s \Hlap} - e^{(s-\eps) \Hlap}) \omg_{\ell}] = \nb_{j}[e^{(s - \eps)\Hlap} (e^{\eps \Hlap} - 1)\omg_{\ell}] \to 0$ in $C^{0}(M)$ by Sobolev and \eqref{eq:L2smth} for high derivatives.  Since $U^{j}(\eps) U^{\ell}(\eps)$ remains uniformly bounded in $L^{1}(M)$ by Lemma \ref{lem:props4heatFlow}, we conclude that the second integral vanishes as $\eps \to 0$ as well. \qedhere

\end{proof}

From Lemma \ref{lem:Duhamel}, we conclude that $W^{\ell}(s) = W_{1}^{\ell}(s) + W_{2}^{\ell}(s) + W_{3}^{\ell}(s)$, with
\begin{align*}
W_{1}^{\ell}(s) =&	2 \int_{0}^{s} e^{(s-s') \Hlap} \nb_{j} (\nb_{k} U^{j} (s') \nb^{k} U^{\ell} (s')) \, \ud s' \\  
W_{2}^{\ell}(s) = & 	\int_{0}^{s} e^{(s-s') \Hlap} \tensor{(\calR_{-})}{^{\ell}_{m j k}} \nb^{m} (U^{j}(s') U^{k}(s')) \, \ud s' \\
W_{3}^{\ell}(s) = &	 \int_{0}^{s} e^{(s-s') \Hlap} \nb^{m} \bb( \tensor{(\calR_{+})}{^{\ell}_{m jk}} U^{j}(s') U^{k}(s') \bb) \, \ud s'
\end{align*}
where the integral $\int_{0}^{s} \, \ud s'$ must be interpreted as in Lemma \ref{lem:Duhamel}.  We remark that $W^{\ell}_{1}$ is the main contribution with a structure similar to those in \cite[Eq. (10)]{MR1298949} and \cite[Eq. (20)]{MR2422377}, and is the only term present in the flat case.

We are now ready to prove Theorem \ref{thm:commEst}.

\begin{proof} [Proof of Theorem \ref{thm:commEst}]
Below, all space-time integrals are over $I \times M$. We omit writing $t$. 

Let $s \in (0, 1]$. 
Taking out the $s'$-integral and integrating by parts, we have
\begin{align*}
\iint \eta \, W_{1}^{\ell}(s) U_{\ell}(s) \, \ud^{1+d} \V
= &  2 \int_{0}^{s} \iint \eta \, e^{(s-s') \Hlap} \nb_{j} (\nb_{k} U^{j} (s') \nb^{k} U^{\ell} (s')) U_{\ell}(s) \, \ud^{1+d} \V \, \ud s' \\
= &  - 2 \int_{0}^{s} \iint \eta \, \nb_{k} U^{j} (s') \nb^{k} U^{\ell} (s') \nb_{j} U_{\ell}(2s-s') \, \ud^{1+d} \V \, \ud s' 
\end{align*}

The $s'$ integrand for every $s' \in (0, s]$ is estimated using H\"older by
\begin{equation} \label{eq:commEst:pf:0}
	\leq C (s')^{-1+\alp} (2s-s')^{-(1-\alp)/2} \, \calU(s')^{2} \calU(2s-s')
\end{equation}
where $\calU(s) := s^{\frac{1-\alp}{2}} \nrm{\nb U(s)}_{L^{3}(I \times M)}$.  Observe that $\calU(s) \leq C \nrm{u}_{L^{3}_{t} (I; B^{\alp}_{3, \infty}(M))} < \infty$ for $0 < s \leq 1$.

For $\alp > 0$, \eqref{eq:commEst:pf:0} is integrable on $(0, s]$, and after rescaling the $ds'$ integral we obtain the estimate
\begin{align*}
\left\abs{\iint \eta \, W_{1}^{\ell}(s) U_{\ell}(s) \, \ud^{1+d} \V \right} \leq &C s^{-\frac{1 - 3 \alp}{2}} \int_0^1 \sigma^{-1+\alp} (2 - \sigma)^{-(1-\alp)/2}\calU(s\sigma)^{2} \calU(s(2-\sigma)) ~d\sigma
\end{align*}
The power of $s$ is non-negative since $\alp \geq 1/3$, and from the Remark after Lemma \ref{lem:cNBesov}, we furthermore have $\calU(s) \to 0$ as $s \to 0$.  Applying the dominated convergence theorem, it follows that
\begin{equation} \label{eq:commEst:pf:1}
\left\abs{\iint \eta \, W_{1}^{\ell}(s) U_{\ell}(s) \, \ud^{1+d} \V \right} \to 0 \hbox{ as } s \to 0.
\end{equation}
 

The other two terms are handled in the same way; in fact, they obey more favorable estimates. Proceeding similarly as before, we see that
\begin{align*}
\iint \eta \, W_{2}^{\ell}(s) U_{\ell}(s) \, \ud^{1+d} \V
=& \int_{0}^{s} \iint \eta \tensor{(\calR_{-})}{^{\ell}_{m j k}} \nb^{m} U^{j}(s') U^{k}(s') U_{\ell}(2s-s') \, \ud^{1+d} \V \, \ud s' \\
& + \int_{0}^{s} \iint \eta \tensor{(\calR_{-})}{^{\ell}_{m j k}}  U^{j}(s') \nb^{m} U^{k}(s') U_{\ell}(2s-s') \, \ud^{1+d} \V \, \ud s'.
\end{align*}

For $s' \in (0, s]$, the integrand of both $s'$-integrals are estimated using H\"older by
\begin{align*}
	\leq & C (s')^{-(1-\alp)/2} \nrm{\Riem}_{L^{\infty}(M)} \nrm{u}^{3}_{L^{3}_{t} (I; B^{\alp}_{3, \infty}(M))}.
\end{align*}

This is again integrable, provided $\alp > -1$. Thus upon integration, we obtain (as $\alp \geq 1/3$)
\begin{equation} \label{eq:commEst:pf:2}
\left\abs{\iint \eta \,  W_{2}^{\ell}(s) U_{\ell}(s) \, \ud^{1+d} \V\right} \leq C s^{(1+\alp)/2} \nrm{\Riem}_{L^{\infty}(M)} \nrm{u}^{3}_{L^{3}_{t} (I; B^{\alp}_{3, \infty}(M))} \to 0 \hbox{ as } s \to 0.
\end{equation}

For $W_{3}^{\ell}$, we compute 
\begin{align*}
\iint \eta \, W_{3}^{\ell}(s) U_{\ell}(s) \, \ud^{1+d} \V
=& - \int_{0}^{s} \iint \eta \tensor{(\calR_{+})}{^{\ell}_{m jk}} U^{j}(s') U^{k}(s') \nb^{m} U_{\ell}(2s-s') \, \ud^{1+d} \V \, \ud s',
\end{align*}
and as before, we then conclude (again using $\alp \geq 1/3$)
\begin{equation} \label{eq:commEst:pf:3}
\left\abs{\iint \eta \, W_{3}^{\ell}(s) U_{\ell}(s) \, \ud^{1+d} \V \right} 
\leq C s^{(1+\alp)/2} \nrm{\Riem}_{L^{\infty}(M)} \nrm{u}^{3}_{L^{3}_{t} (I; B^{\alp}_{3, \infty}(M))} \to 0 \hbox{ as } s \to 0.
\end{equation}

Combining \eqref{eq:commEst:pf:1}--\eqref{eq:commEst:pf:3}, we obtain \eqref{eq:commEst}. \qedhere
\end{proof}

\appendix
\section{$L^{p}$ theory of the Hodge heat flow on $M$} \label{sec:heatFlowEst}
In this appendix, we outline an approach for proving short-time $L^{p}$ estimates for solutions to \eqref{eq:HHF}; as an application, we give a proof of Proposition \ref{prop:embed}. Our method is elementary and self-contained. 

Let $u^{\ell} \in C^{\infty}_{c}(M)$, and $U^{\ell}(s) := e^{s \Hlap} u^{\ell}$ for $s > 0$. For $2 \leq p < \infty$ and $s \in (0, 1]$, we aim to show
\begin{align}
\nrm{U(s)}_{L^{p}(M)} \leq & C \nrm{u}_{L^{p}(M)} \label{eq:heatFlowEst:Lp:1} \\
\nrm{\nb U(s)}_{L^{p}(M)} \leq & C (\nrm{\nb u}_{L^{p}(M)} + \nrm{u}_{L^{p}(M)} ) \label{eq:heatFlowEst:Lp:2}\\
\nrm{\nb U(s)}_{L^{p}(M)} \leq & C s^{-1/2} \nrm{u}_{L^{p}(M)} \label{eq:heatFlowEst:Lp:3}
\end{align}
where $C > 0$ depends only on $p$ and $\nrm{\Riem}_{L^{\infty}(M)}$.

We now sketch a proof of the short-time $L^{p}$ estimates \eqref{eq:heatFlowEst:Lp:1}--\eqref{eq:heatFlowEst:Lp:3}. We first prove these estimates for $p = 2Z$, where $Z \geq 2$ is an integer. In fact, our proof below requires the Riemannian manifold $M$ to be merely complete with sectional curvatures bounded from above and below, i.e., $\nrm{\Riem}_{L^{\infty}(M)} < \infty$. The general case of $2 \leq p < \infty$ then follows by interpolating with the corresponding $L^2$-type estimates, which are easier. The relevant interpolation theory can be found in \cite[Chapter 7]{MR1163193} when $M$ has a `bounded geometry' (the definition can be found in \cite[Chapter 7]{MR1163193}), and can be deduced from the case of $M =\bbT^{d}$ when $M$ is compact, as in the proof of Proposition \ref{prop:embed} below.

We start with the following Bochner-type formula:
\begin{equation} \label{eq:Bochner:1}
\rd_{s} \abs{U}^{2} - \lap \abs{U}^{2} + 2 \abs{\nb U}^{2} = - 2 \Ric_{jk} U^{j} U^{k}
\end{equation}

Multiplying by $\abs{U}^{2(Z-1)}$ and integrating over $M$, we have
\begin{equation*} 
\frac{1}{Z} \rd_{s} \int_{M} \abs{U}^{2Z} \, \ud^{d} \V 
= \int_{M} (\lap \abs{U}^{2}  - 2 \abs{\nb U}^{2} - 2 \Ric_{jk} U^{j} U^{k} ) \abs{U}^{2(Z-1)} \, \ud^{d} \V.
\end{equation*}

The contribution of $-2\abs{\nb U}^{2}$ is non-positive. Integrating the $\lap$-term by parts\footnote{We remark that these formal manipulations may be made rigorous by using $u^{\ell} \in C^{\infty}_{c}(M)$ and $L^{2}(M)$ regularity theory for $U^{\ell}(s) = e^{s \Hlap} u^{\ell}$.}, we also have
\begin{equation*}
\int_{M} \lap \abs{U}^{2} \abs{U}^{2(Z-1)} \, \ud^{d} \V
= - (Z-1) \int_{M} \nb_{j} \abs{U}^{2} \nb^{j} \abs{U}^{2} \abs{U}^{2(Z-2)} \, \ud^{d} \V \leq 0.
\end{equation*}

Thus we arrive at
\begin{align*}
\frac{1}{Z} \rd_{s} \int_{M} \abs{U}^{2Z} \, \ud^{d} \V 
& \leq 2 \nrm{\Ric}_{L^{\infty}(M)} \int_{M}  \abs{U}^{2Z} \, \ud^{d} \V.
\end{align*}

Applying Gronwall, we conclude \eqref{eq:heatFlowEst:Lp:1}. 

To proceed, we need to compute the equation satisfied by $\nb_{j} u^{\ell}$. It is not difficult to check the schematic identity
\begin{equation*}
(\rd_{s} - \Hlap) \nb_{j} u^{\ell} = \Riem \cdot \nb u + \nb \Riem \cdot u,
\end{equation*}
where $\Riem \cdot \nb u$ is a linear combination of terms involving $\Riem$ and $\nb u$, and similarly for $\nb \Riem \cdot u$. Accordingly, we have the following schematic Bochner-type identity:
\begin{equation} \label{eq:Bochner:2}
	\rd_{s} \abs{\nb U}^{2} - \lap \abs{\nb U}^{2}  + 2 \abs{\nb \nb U}^{2} = \Riem \cdot \nb U \cdot \nb U + \nb \Riem \cdot U \cdot \nb U.
\end{equation}

As before, multiply by $\abs{\nb U}^{2(Z-1)}$ and integrate over $M$. The contribution of the first term on the right-hand is bounded by $\leq C\nrm{\Riem}_{L^{\infty}(M)} \int_{M} \abs{\nb U}^{2Z}$. For the second term, we will integrate by parts and apply Young's inequality to estimate
\begin{align*}
	& \int_{M} \nb \Riem \cdot U \cdot \nb U \abs{\nb U}^{2(Z-1)} \, \ud^{d} \V \\
	& \quad \leq 2 \int_{M} \abs{\nb \nb U}^{2} \abs{\nb U}^{2(Z-1)} \, \ud^{d} \V
	+ C_{Z} \int_{M} \nrm{\Riem}_{L^{\infty}(M)} \abs{\nb U}^{2Z}  + \nrm{\Riem}^{Z+1}_{L^{\infty}(M)} \abs{U}^{2Z} \, \ud^{d} \V
\end{align*}

Note that the first term on the last line cancels with the contribution of $2 \abs{\nb \nb U}^{2}$ in \eqref{eq:Bochner:2}. Thus, proceeding as before using Gronwall, as well as using \eqref{eq:heatFlowEst:Lp:1} to control $\int_{M} \abs{U}^{2Z}$, we obtain \eqref{eq:heatFlowEst:Lp:2}.

We now turn to \eqref{eq:heatFlowEst:Lp:3}. From \eqref{eq:Bochner:2}, we obtain
\begin{equation} \label{eq:Bochner:3}
	\rd_{s} (s \abs{\nb U}^{2}) - \lap (s \abs{\nb U}^{2}) + 2 s \abs{\nb \nb U}^{2} - \abs{\nb U}^{2} = s \Riem \cdot \nb U \cdot \nb U + s \nb \Riem \cdot U \cdot \nb U
\end{equation}

Adding one half of \eqref{eq:Bochner:1}, note that the undesirable last term on the left-hand side cancels, and we arrive at 
\begin{equation} \label{eq:Bochner:4}
	\rd_{s} \Psi + \lap \Psi + 2 s \abs{\nb \nb U}^{2} = s \Riem \cdot \nb U \cdot \nb U + \Riem \cdot U \cdot U + s \nb \Riem \cdot U \cdot \nb U.
\end{equation}
where $\Psi := s \abs{\nb U}^{2} + \frac{1}{2} \abs{U}^{2}$. We now multiply this equation by $\Psi^{Z-1}$ and integrate over $M$. The contribution of the first two terms of the right-hand side is bounded by 
\begin{equation*}
\leq C \nrm{\Riem}_{L^{\infty}(M)} \int_{M} \Psi^{Z} \, \ud^{d} \V
\end{equation*} 

For the last term of \eqref{eq:Bochner:4}, we integrate by parts and use Young's inequality to bound
\begin{align*}
	& \int_{M} s \nb \Riem \cdot U \cdot \nb U \cdot \Psi^{Z-1} \, \ud^{d} \V \\
	& \quad \leq \int_{M} 2 s \abs{\nb \nb U}^{2} \Psi^{Z-1} \, \ud^{d} \V
		+ C_{Z}  \int_{M} \nrm{\Riem}_{L^{\infty}(M)}\Psi^{Z} + s \nrm{\Riem}^{Z+1}_{L^{\infty}(M)} \abs{U}^{2Z} \, \ud^{d} \V
\end{align*}
and the first term on the last line again cancels with the contribution of $2 s \abs{\nb \nb U}^{2}$. Thus,
\begin{equation*}
	\rd_{s} \int_{M} \Psi^{Z} \ud^{d} \V \leq C_{Z, \nrm{\Riem}_{L^{\infty}(M)}} \int \Psi^{Z} \, \ud \V.
\end{equation*}

By Gronwall, we obtain $\int_{M} \Psi^{Z}(s) \leq C \int_{M} \Psi^{Z}(0)$ for $0 < s \leq 1$. Since $s \abs{\nb U(s)}^{2} \leq \Psi(s)$ and $\Psi(0) = \frac{1}{2}\abs{U(0)}^{2}$, we obtain \eqref{eq:heatFlowEst:Lp:3}.

We now give a proof of Proposition \ref{prop:embed}.
\begin{proof} [Proof of Proposition \ref{prop:embed}]
First, we prove (1). Let $0 < \alp \leq 1$ and $2 \leq p < \infty$. Recall that on any compact manifold $M$, $C^{\infty}_{c}(M)$ is dense in $W^{1, p}(M)$ \cite{MR1636569, MR1688256}. Furthermore, it follows from a general result in the theory of complex interpolation \cite[Theorem 4.2.2]{MR0482275} that $W^{1, p}(M)$ is densely embedded into every complex interpolation space $W^{\alp, p}(M)$ for $0 < \alp <1$. We therefore conclude that $C^{\infty}_{c}(M)$ is dense in $W^{\alp, p}(M)$. Thus, the assertion $W^{\alp, p}(M) \seq B^{\alp}_{p, c(\bbN)}(M)$ is equivalent to
\begin{equation*}
	s^{\frac{1-\alp}{2}} \nrm{\nb e^{s \Hlap} u}_{L^{p}(M)} 
	\leq C \nrm{u}_{W^{\alp, p}(M)}
\end{equation*}
for $u^{\ell} \in C^{\infty}_{c}(M)$ and $s \in (0, 1]$. But this follows from complex interpolation between \eqref{eq:heatFlowEst:Lp:2} and \eqref{eq:heatFlowEst:Lp:3}. 

Next, we prove (2). Let $0 < \alp < \alp' < 1$ and $1 \leq p < \infty$. We begin by noting that when $M = \bbT^{d}$, the inequality
\begin{equation} \label{eq:embed:torus}
	\nrm{u}_{W^{\alp, p}(\bbT^{d})} 
	\leq C\nrm{u}_{C^{\alp'}(\bbT^{d})}
\end{equation}
holds for every $u^{\ell} \in C^{\alp'}(\bbT^{d})$. This can be proved by, e.g., using the characterization of these spaces as in \cite[Theorems 2.3.1 and 2.4.1]{MR1163193}\footnote{On $\bbT^{d}$, we use the component-wise definition for spaces of vector fields. We also note for the reader that $C^{\alp'}(\bbT^{d}) = B^{\alp'}_{\infty, \infty}$ and $W^{\alp, p}(\bbT^{d}) = F^{\alp}_{p, 2}$ in the notations of \cite{MR1163193}.}; we omit the proof. In the general case, our idea is to reduce to the special case $M = \bbT^{d}$ by using a finite partition of unity, which exists thanks to compactness of $M$.

We now make our idea precise. Since $M$ is compact, for every $\dlt > 0$, there exists a locally finite covering $\set{P_{j}}_{j=1}^{J}$ of $M$ by geodesic balls of radius $\dlt > 0$, and also a partition of unity $\set{\psi_{j}}_{j=1}^{J}$ such that $\supp \psi_{j} \seq P_{j}$. Take $\dlt$ to be sufficiently small, so that each $P_{j}$ is contained in a single coordinate chart. Then for each $j = 1,\ldots, J$,  we can find a diffeomorphism $\Phi_{j} : P_{j} \to \Phi_{j}(P_{j}) \seq \bbT^{d}$. 

For $X(M) = C^{\alp}(M), L^{p}(M)$ or $W^{1, p}(M)$ (with $0 < \alp < 1$, $1 \leq p \leq \infty$), by its localization and diffeomorphism invariance properties, there exist constants $c, C > 0$ such that
\begin{equation} \label{eq:embed:equivNorms}
	c \nrm{\psi_{j} u}_{X(M)} \leq \sum_{j=1}^{J} \nrm{(\Phi_{P_{j}})_{\star} (\psi_{j} u) }_{X(\bbT^{d})} \leq C \nrm{\psi_{j} u}_{X(M)} 
\end{equation}

for every $u^{\ell} \in X(M)$, where $(\Phi_{P_{j}})_{\star} (\psi_{j} u)$ is the pushforward of $\psi_{j}u^{\ell}$ to a vector field on $\Phi_{j}(P_{j}) \seq \bbT^{d}$. By complex interpolation, \eqref{eq:embed:equivNorms} also holds for $X(M) = W^{\alp, p}(M)$ with $0 < \alp < 1$. Now the desired inclusion $C^{\alp'}(M) \seq W^{\alp, p}(M)$ follows from \eqref{eq:embed:torus}. \qedhere
\end{proof}

\section{Equivalence of Besov-type spaces when $M = \bbT^{d}$ or $\bbR^{d}$} \label{sec:equivBesov}
In this appendix, we outline a proof of the equivalence of the Besov-type spaces as in Definition \ref{def:besov} and the standard spaces defined by Littlewood-Paley theory on $\bbT^{d}$ and $\bbR^{d}$.

We begin by recalling the definition of (inhomogeneous) Littlewood-Paley projections; we borrow the notations from \cite{MR2422377}.
Let $\chi : [0, \infty) \to \bbR$ be a smooth non-negative function such that $\chi(\rho) = 1$ for $\rho \leq 1/2$ and $\chi(\rho) = 0$ for $\rho \geq 1$. Let us denote by $\calF$ the Fourier transform on $\bbT^{d}$ or $\bbR^{d}$.
For $k \geq 0$ an integer and a scalar function $u$, define
\begin{equation*}
	\Dlt_{k} u := \calF^{-1} \bb[ \bb( \chi(\abs{\xi}/2^{k+1}) - \chi(\abs{\xi}/2^{k}) \bb) \calF u(\xi) \bb], \quad
	\Dlt_{-1} u := \calF^{-1} (\chi(\abs{\xi}) \calF u ).
\end{equation*}

We now define the standard Besov spaces on $\bbT^{d}$ and $\bbR^{d}$; to distinguish from those in Definition \ref{def:besov}, we shall accent these spaces with a hat, e.g., $\widehat{B}^{\alp}_{p, r}$.

For $\alp \in \bbR$ and $1 \leq p, r \leq \infty$, the standard definition of the inhomogeneous Besov norm $\nrm{\cdot}_{\widehat{B}^{\alp}_{p, r}}$ (via Littlewood-Paley theory) reads 
\begin{equation*}
	\nrm{u}_{\widehat{B}^{\alp}_{p, r}} := \nrm{\Dlt_{-1} u}_{L^{p}} + \bb( \sum_{k \geq 0} (2^{\alp k}\nrm{\Dlt_{k} u}_{L^{p}})^{r} \bb)^{1/r},
\end{equation*}
with the usual modification for $r = \infty$. The space $\widehat{B}^{\alp}_{p, r}$ is defined to be the space of tempered distributions $u$ such that $\nrm{u}_{\widehat{B}^{\alp}_{p, r}} < \infty$. 
Following \cite{MR2422377}, for $\alp \in \bbR$ and $1 \leq p \leq \infty$ we also define the space $\widehat{B}^{\alp}_{p, c(\bbN)}$ as
\begin{equation*}
	\widehat{B}^{\alp}_{p, c(\bbN)} := \set{u \in \widehat{B}^{\alp}_{p, \infty} : 2^{\alp k} \nrm{\Dlt_{k} u}_{L^{p}} \to 0 \hbox{ as } k \to \infty}.
\end{equation*}

We extend these spaces component-wise to vector fields and 1-forms. 

For $\alp > 0$ and $1 \leq p, r \leq \infty$, note that $\widehat{B}^{\alp}_{p, r}, \widehat{B}^{\alp}_{p, c(\bbN)} \seq L^{p}$. Moreover, it is a standard fact that for $\alp > 0$ and $1 \leq p, r < \infty$, $C^{\infty}_{c}$ is dense in $\widehat{B}^{\alp}_{p, r}$. On the other hand, for $\alp > 0$, $1 \leq p < \infty$ and $r = \infty$, the closure of $C^{\infty}_{c}$ under the norm $\nrm{\cdot}_{\widehat{B}^{\alp}_{p, \infty}}$ is not $\widehat{B}^{\alp}_{p, \infty}$, but $\widehat{B}^{\alp}_{p, c(\bbN)}$.

The main result of this appendix is the following.
\begin{proposition} 
Let $M = \bbT^{d}$ or $\bbR^{d}$. Then for $0 < \alp < 1$, $1 \leq p, r < \infty$, we have
\begin{equation*}
	B^{\alp}_{p, r}(M) = \widehat{B}^{\alp}_{p, r}, \quad
	B^{\alp}_{p, c(\bbN)}(M) = \widehat{B}^{\alp}_{p, c(\bbN)}.
\end{equation*}

More precisely, the two spaces coincide and the norms are equivalent.
\end{proposition}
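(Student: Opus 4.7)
Since $\bbT^{d}$ and $\bbR^{d}$ are flat, the Ricci tensor vanishes and the Weitzenb\"ock expression \eqref{eq:Hlap:vf} reduces the Hodge Laplacian on vector fields to the componentwise scalar Laplacian $\lap$; hence $e^{s\Hlap}$ acts componentwise as the standard heat semigroup. Both the heat-flow norm of Definition~\ref{def:besov} and the Littlewood--Paley norm are defined componentwise, so I reduce to the scalar case. Modulo the $L^{p}$ terms (which match since $\nrm{\Dlt_{-1} u}_{L^{p}} \leq C\nrm{u}_{L^{p}}$ and, conversely, $\nrm{u}_{L^{p}} \aleq \nrm{\Dlt_{-1} u}_{L^{p}} + (\sum_{k} (2^{\alp k} \nrm{\Dlt_{k} u}_{L^{p}})^{r})^{1/r}$ by H\"older using $\alp > 0$), the essential task is to prove, for scalar $u \in C^{\infty}_{c}$ and $0 < \alp < 1$,
\[
\nrm{s^{(1-\alp)/2} \nrm{\nb e^{s\lap} u}_{L^{p}}}_{L^{r}((0,1],\ud s/s)} \aeq \bb(\sum_{k \geq 0} (2^{\alp k} \nrm{\Dlt_{k} u}_{L^{p}})^{r}\bb)^{1/r}.
\]

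\textbf{Key multiplier bounds.} Two elementary kernel estimates drive the equivalence. First, the Fourier multiplier $i \xi_{j} e^{-s|\xi|^{2}} \phi_{k}(\xi)$, where $\phi_{k}$ is the usual bump supported in $\{|\xi| \aeq 2^{k}\}$, has inverse Fourier transform that is Schwartz at scale $2^{-k}$ with $L^{1}$-norm comparable to $2^{k} e^{-c 2^{2k} s}$; Young's inequality therefore gives
\[
\nrm{\nb e^{s \lap} \Dlt_{k} u}_{L^{p}} \leq C 2^{k} e^{-c 2^{2k} s} \nrm{\Dlt_{k} u}_{L^{p}}, \quad 1 \leq p \leq \infty.
\]
Second, using the algebraic identity $\sum_{j} \frac{-i\xi_{j}}{|\xi|^{2}} (i\xi_{j}) = 1$, I write $\Dlt_{k} u = \sum_{j} m_{k,s}^{j}(D) \rd_{j} e^{s\lap} u$ with $m_{k,s}^{j}(\xi) := \phi_{k}(\xi) \frac{-i\xi_{j}}{|\xi|^{2}} e^{s|\xi|^{2}}$. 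For $s \leq 2^{-2k}$ the exponential factor $e^{s|\xi|^{2}}$ is bounded on $\supp \phi_{k}$, so $m_{k,s}^{j}$ has inverse Fourier transform Schwartz at scale $2^{-k}$ with $L^{1}$-norm comparable to $2^{-k}$ uniformly in $s$, yielding
\[
\nrm{\Dlt_{k} u}_{L^{p}} \leq C 2^{-k} \nrm{\nb e^{s\lap} u}_{L^{p}}, \quad s \leq 2^{-2k}.
\]

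\textbf{Conversion between $\ell^{r}$ and $L^{r}(\ud s/s)$.} For the forward direction $\widehat B^{\alp}_{p,r} \seq B^{\alp}_{p,r}(M)$, decompose $u$ into Littlewood--Paley pieces and use the first multiplier bound to obtain $s^{(1-\alp)/2} \nrm{\nb e^{s\lap} u}_{L^{p}} \leq \sum_{k \geq 0} K(s,k) \cdot 2^{\alp k} \nrm{\Dlt_{k} u}_{L^{p}}$ with $K(s,k) = (2^{2k} s)^{(1-\alp)/2} e^{-c 2^{2k} s}$. Crucially, $K$ depends only on the product $2^{2k} s$, so under $\tau = -\tfrac{1}{2} \log s$ the operator $\{a_{k}\} \mapsto \sum_{k} K(s, k) a_{k}$ becomes a convolution in $\tau - k$, and Young's inequality on the multiplicative group $(0,\infty)$ delivers the $L^{r}((0,1],\ud s/s)$ bound. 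For the reverse direction, raise the second bound to the $r$-th power, average over $s \in [2^{-2(k+1)}, 2^{-2k}]$ against $\ud s/s$, multiply by $2^{\alp k r}$, and sum in $k$; since $s \aeq 2^{-2k}$ implies $2^{(\alp-1)k} \aeq s^{(1-\alp)/2}$ and the dyadic intervals partition $(0,1]$, this gives $\sum_{k}(2^{\alp k} \nrm{\Dlt_{k} u}_{L^{p}})^{r} \aleq \nrm{s^{(1-\alp)/2}\nrm{\nb e^{s\lap} u}_{L^{p}}}_{L^{r}(\ud s/s)}^{r}$.

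\textbf{The $c(\bbN)$ case and main obstacle.} For the $c(\bbN)$ identification, the same kernel bounds show that $s^{(1-\alp)/2} \nrm{\nb e^{s\lap} u}_{L^{p}} \to 0$ as $s \to 0$ is equivalent to $2^{\alp k} \nrm{\Dlt_{k} u}_{L^{p}} \to 0$ as $k \to \infty$: splitting $u$ at threshold $N$, the low-frequency contribution to the heat-flow norm vanishes as $s \to 0$ (since each $K(s,k) \to 0$ for fixed $k$), while the high-frequency contribution is controlled by $\sup_{k \geq N} 2^{\alp k} \nrm{\Dlt_{k} u}_{L^{p}}$ via the convolution estimate, and conversely by the representation formula for $\Dlt_{k} u$. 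The main technical hurdle is handling the convolution estimate on the multiplicative group $(0, \infty)$ with both $\ell^{r}$-summation and $L^{r}(\ud s/s)$-integration uniformly in $1 \leq r < \infty$; once the dependence of $K(s, k)$ on the single variable $2^{2k} s$ is recognized, Young's inequality reduces this to a finite constant, and the rest is bookkeeping.
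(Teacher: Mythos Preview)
Your proof is correct and follows the same overall architecture as the paper's: reduce to the scalar case on flat space, establish two multiplier-type bounds relating $\nabla e^{s\lap}$ and $\Dlt_{k}$, and then pass between the continuous $L^{r}(\ud s/s)$ and discrete $\ell^{r}$ norms. The forward direction $\widehat{B}^{\alp}_{p,r} \seq B^{\alp}_{p,r}(M)$ is essentially identical to the paper's, the only cosmetic difference being that your kernel $K(s,k) = (2^{2k}s)^{(1-\alp)/2} e^{-c 2^{2k} s}$ carries exponential decay where the paper records the polynomial $\min\{(s^{1/2}2^{k})^{1-\alp},(s^{1/2}2^{k})^{-\alp}\}$, and you invoke Young's inequality where the paper invokes Schur's test; these are interchangeable here.

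The reverse direction is where you genuinely diverge. The paper writes $\Dlt_{k} u = -\int_{0}^{1} s \Dlt_{k} \rd_{s} e^{s\lap} u \, \ud s/s + \Dlt_{k} e^{\lap} u$, bounds the integrand by a second two-sided kernel estimate \eqref{eq:equivBesov:1}, and applies Schur's test again. You instead produce a pointwise representation $\Dlt_{k} u = \sum_{j} m^{j}_{k,s}(D)\,\rd_{j} e^{s\lap} u$ valid for $s \leq 2^{-2k}$, extract the single bound $\nrm{\Dlt_{k} u}_{L^{p}} \aleq 2^{-k}\nrm{\nb e^{s\lap} u}_{L^{p}}$, and average over the dyadic shell $s \in [2^{-2(k+1)},2^{-2k}]$ before summing. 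This is more elementary: it avoids the reproducing formula and the second Schur test, at the small cost that your bound is only available in the regime $s \aleq 2^{-2k}$ (which is all you need). For the $c(\bbN)$ identification, the paper simply cites density of $C^{\infty}_{c}$ in both spaces under the equivalent $\infty$-norms, which is shorter than your direct argument; but your splitting at a frequency threshold $N$ is also correct and in fact reproves the density statement quantitatively.
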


\begin{proof} 
In view of density of $C^{\infty}_{c}$ in all of these spaces, it suffices to establish equivalence of the norms $\nrm{\cdot}_{B^{\alp}_{p, r}(M)}$ and $\nrm{\cdot}_{\widehat{B}^{\alp}_{p, r}}$ for $1 \leq p < \infty$, $1 \leq r \leq \infty$.
The key estimates are provided by the lemma below; note that on $\bbT^{d}$ or $\bbR^{d}$, the Hodge heat semi-group $e^{s \Hlap}$ just becomes the (component-wise) standard heat semi-group $e^{s \lap}$.
\begin{lemma} \label{lem:equivBesov}
Let $1 \leq p < \infty$ and $\alp \geq 0$. Then for $k \geq -1$ an integer and $0 < s \leq 1$, we have
\begin{align} 
2^{\alp k} s \nrm{\Dlt_{k} \rd_{s} e^{s \lap} u}_{L^{p}} 
\leq C & \min\set{(s^{\frac{1}{2}} 2^{k})^{\alp}, (s^{\frac{1}{2}} 2^{k})^{\alp - 1}} s^{\frac{1-\alp}{2}} \nrm{\nb e^{\frac{s}{2} \lap} u}_{L^{p}}  \label{eq:equivBesov:1} \\
s^{\frac{1-\alp}{2}} \nrm{\nb e^{s \lap} \Dlt_{k} u}_{L^{p}} 
\leq C & \min\set{(s^{\frac{1}{2}} 2^{k})^{1- \alp},(s^{\frac{1}{2}} 2^{k})^{-\alp}} (2^{\alp k} \nrm{\Dlt_{k} u}_{L^{p}}) \label{eq:equivBesov:2}
\end{align}
\end{lemma}

Assuming the lemma for the moment, we sketch how equivalence of the norms $\nrm{\cdot}_{B^{\alp}_{p, r}(M)}$, $\nrm{\cdot}_{\widehat{B}^{\alp}_{p, r}}$ is proven. To establish $\nrm{\cdot}_{B^{\alp}_{p, r}(M)} \leq C \nrm{\cdot}_{\widehat{B}^{\alp}_{p, r}}$, we begin by writing
\begin{align*}
	\nrm{u}_{B^{\alp}_{p, r}(M)} 
	=& \nrm{u}_{L^{p}} + \bb\Vert s^{\frac{1-\alp}{2}} \nrm{\sum_{k \geq -1} \nb e^{s \lap} \Dlt_{k} u }_{L^{p}} \bb\Vert_{L^{r}( (0, 1], \ud s/ s )}.
\end{align*}

The first term is bounded by $\nrm{u}_{\widehat{B}^{\alp}_{p, r}}$ as $\alp > 0$. For the second term, we apply triangle and \eqref{eq:equivBesov:2} in Lemma \ref{lem:equivBesov} to estimate it by 
\begin{equation} \label{eq:equivBesov:pf:1}
	\leq C \bb\Vert \sum_{k \geq -1}  K_{1}(s, k) (2^{\alp k} \nrm{\Dlt_{k} u}_{L^{p}}) \bb\Vert_{L^{r}( (0, 1], \ud s/ s )}.
\end{equation}
where $K_{1}(s, k) := \min\set{(s^{\frac{1}{2}} 2^{k})^{1- \alp},(s^{\frac{1}{2}} 2^{k})^{-\alp}}$. As 
\begin{equation*}
\sup_{k \geq -1} \int_{0}^{1} K_{1}(s, k) \, \frac{\ud s}{s} < \infty , \quad
\sup_{s \in (0, 1]} \sum_{k \geq -1} K_{1}(s,k) < \infty
\end{equation*} 
it follows by Schur's test that $\eqref{eq:equivBesov:pf:1} \leq C \nrm{2^{\alp k} \nrm{\Dlt_{k} u}_{L^{p}}}_{\ell^{r}(\set{k \geq -1})} \leq C \nrm{u}_{B^{\alp}_{p, r}}$ as desired.

To prove the other direction $\nrm{\cdot}_{\widehat{B}^{\alp}_{p, r}} \leq C \nrm{\cdot}_{B^{\alp}_{p, r}(M)}$, we write
\begin{align*}
	\nrm{u}_{\widehat{B}^{\alp}_{p, r}} 
	=& \nrm{\Dlt_{-1} u}_{L^{p}} + \bb\Vert 2^{\alp k} \bb\Vert- \int_{0}^{1} s \Dlt_{k} \rd_{s} e^{s \lap}  u \, \frac{\ud s}{s} + \Dlt_{k} e^{\lap}  u \bb\Vert_{L^{p}} \bb\Vert_{\ell^{r}(\set{k \geq 0 })}.
\end{align*}

Note that $\nrm{\Dlt_{-1} u}_{L^{p}} \leq C \nrm{u}_{L^{p}}$. Moreover, 
\begin{equation*}
\nrm{2^{\alp k} \nrm{\Dlt_{k} e^{\lap} u}_{L^{p}} }_{\ell^{r}(\set{k \geq 0})} 
\leq C \nrm{2^{(\alp-1) k} \nrm{\abs{\nb} \, \Dlt_{k} e^{\lap} u}_{L^{p}}}_{\ell^{r}(\set{k \geq 0})} 
\leq C \nrm{u}_{L^{p}}
\end{equation*} 
where $\abs{\nb} = \sqrt{-\lap}$. Thus, it remains to show
\begin{equation*}
	\bb\Vert 2^{\alp k} \int_{0}^{1} s \nrm{\Dlt_{k} \rd_{s} e^{s \lap} u}_{L^{p}} \, \frac{\ud s}{s} \bb\Vert_{\ell^{r}(\set{k \geq 0})} 
	\leq C \nrm{u}_{B^{\alp}_{p, r}(M)}.
\end{equation*}

This can be proven similarly as before, using \eqref{eq:equivBesov:1} in Lemma \ref{lem:equivBesov} instead of \eqref{eq:equivBesov:2}. \qedhere
\end{proof}

\begin{proof} [Proof of Lemma \ref{lem:equivBesov}]
The guiding principle is that $e^{s \lap}$ behaves like a frequency cut-off at $\abs{\xi} \leq C s^{-\frac{1}{2}}$, and thus we obtain more favorable estimate by moving the derivative to fall on the lower frequency cut-off.
Since $\rd_{s} e^{s \lap} = \nb^{\ell} \nb_{\ell} e^{s \lap} = \nb^{\ell} e^{\frac{s}{2} \lap} \nb_{\ell} e^{\frac{s}{2} \lap}$, we have for $k \geq -1$ and $0 < s \leq 1$ (via standard Littlewood-Paley theory and heat kernel estimates) 
\begin{align*}
2^{\alp k} s \nrm{\Dlt_{k} \rd_{s} e^{s \lap} u}_{L^{p}} 
\leq C (s^{\frac{1}{2}} 2^{k})^{\alp}  s^{\frac{1-\alp}{2}} \nrm{\nb e^{\frac{s}{2} \lap} u}_{L^{p}}.
\end{align*} 

To finish the proof of \eqref{eq:equivBesov:1}, we need an improvement in the regime $k \geq 0$ and $s^{\frac{1}{2}} 2^{k} > 1$. Using $\nrm{\Dlt_{k} (\cdot) }_{L^{p}} \leq C 2^{-k} \nrm{\abs{\nb} \, \Dlt_{k} (\cdot) }_{L^{p}}$ and $\nrm{\abs{\nb} \, \nb_{\ell} e^{\frac{s}{2} \lap} (\cdot) }_{L^{p}} \leq C s^{-1} \nrm{\cdot}_{L^{p}}$, we get
\begin{equation*}
2^{\alp k} s \nrm{\Dlt_{k} \rd_{s} e^{s \lap} u}_{L^{p}} 
\leq C (s^{\frac{1}{2}} 2^{k})^{\alp-1}  s^{\frac{1-\alp}{2}} \nrm{\nb e^{\frac{s}{2} \lap} u}_{L^{p}}
\end{equation*}
which proves \eqref{eq:equivBesov:1}. 

The remaining estimate \eqref{eq:equivBesov:2} is proven similarly, moving the derivative $\nb$ to fall on $e^{s \lap}$ or $\Dlt_{k}$ depending on whether $s^{\frac{1}{2}} 2^{k} < 1$ or otherwise, respectively. \qedhere
\end{proof}


\bibliographystyle{amsplain}

\providecommand{\bysame}{\leavevmode\hbox to3em{\hrulefill}\thinspace}
\providecommand{\MR}{\relax\ifhmode\unskip\space\fi MR }
\providecommand{\MRhref}[2]{%
  \href{http://www.ams.org/mathscinet-getitem?mr=#1}{#2}
}
\providecommand{\href}[2]{#2}


\end{document}